\documentclass[12pt,reqno]{amsart}
\usepackage{latexsym,amscd, amssymb,amsmath, amsthm}
 
\usepackage[T2A]{fontenc}
\usepackage[utf8]{inputenc} 
\usepackage[english]{babel}
\newtheorem{theorem}{Theorem}[section]
\newtheorem{example}{Example}[section]
\usepackage{mathrsfs}
\usepackage{mathtools}
\usepackage[vcentermath]{youngtab}
\usepackage{enumitem}

\begin{document}

 \title{ \MakeUppercase{  The Graph Algebra  I: Representation-Theoretic Structure
}} 

\author{Leonid Bedratyuk}
\address{ Khmelnytsky National University, Faculty of Information Technology, Ukraine}
\email{leonidbedratyuk@khmnu.edu.ua}

\begin{abstract}
The paper studies the graph algebra whose monomial basis is naturally
indexed by simple graphs on a fixed set of vertices. This algebra is at the
same time the algebra of pseudo-Boolean functions on the Boolean cube and a
natural object of algebraic combinatorics, related to the Boolean lattice of
subsets of the edge set of the complete graph.

The main aim of the paper is to study two compatible representation-theoretic
structures on this algebra: the action of the Lie algebra $\mathfrak{sl}_2$,
arising from the operators of adding and deleting one edge, and the action of
the pair group $S_n^{(2)}$, induced by the renumbering of vertices. It is
proved that the graph algebra with this $\mathfrak{sl}_2$-action is
isomorphic to a tensor power of the standard two-dimensional
$\mathfrak{sl}_2$-module, and on this basis its decomposition into
irreducible $\mathfrak{sl}_2$-modules is obtained. Primitive spaces, that is,
the kernels of the edge-deletion operator on rank components, are also
described, and it is shown that they have a natural interpretation in terms
of two-row Specht modules.

It is then established that the $\mathfrak{sl}_2$-action commutes with the
action of the pair group. It follows that the space of graph invariants also
inherits the structure of an $\mathfrak{sl}_2$-module. Using Schur--Weyl
duality, primitive invariants are described through the fixed parts of the
restrictions of two-row Specht modules from the full symmetric group on the
edge set to the pair group. As a consequence, the classical enumeration of
non-isomorphic graphs by the number of edges receives a representation-theoretic
refinement: the orbital components entering the Burnside--Polya formula
decompose into natural primitive contributions associated with the
$\mathfrak{sl}_2$-structure and two-row Specht modules.

\end{abstract}

\maketitle 

\section{Introduction}

Let $G$ be a simple graph on a fixed set of $n$ vertices. Since every such
graph is determined by the choice of a subset of the edge set of the complete
graph $K_n$, the set of all simple graphs on these vertices is naturally
identified with the Boolean cube $\{0,1\}^m$,
$
m=\binom n2.
$
Denoting the corresponding edge coordinates by
$x_1,x_2,\ldots,x_m$, we pass from graphs themselves to the algebra of
pseudo-Boolean functions on the Boolean cube, that is, to the quotient algebra
$$
\mathcal A
=
\mathbb C[x_1,\dots,x_m]/(x_1^2-x_1,\dots,x_m^2-x_m),
$$
which in this paper will be called the \emph{graph algebra} and which is the
main object of our study. Thus, the algebra $\mathcal A$ combines two
viewpoints: the combinatorial one, in which its basis is indexed by graphs,
and the functional one, in which its elements are functions on the set of all
graphs. Its basis consists of squarefree monomials
$$
\boldsymbol{x}_S=\prod_{e\in S}x_e,\qquad S\subseteq [m],
$$
which naturally correspond to simple graphs with edge set $S$. The number of
edges defines a natural rank decomposition of the vector space
$$
\mathcal A
=
\bigoplus_{k=0}^{m} A_k,
$$
where
$$
A_k
=
\operatorname{span}_{\mathbb C}
\left\{
\boldsymbol{x}_S:\ |S|=k
\right\}.
$$

Algebras of this type have already appeared in works related to Mobius
algebras, orbit algebras, and reconstruction problems in graph theory
\cite{Mnukhin1992,MnukhinMobius}. A close approach was also developed by
Buchwalder, where the Boolean algebra of functions on a set of subsets is used
to describe generalized orbit algebras \cite{Buch,Buchwalder2010}. It is in
such an orbital setting that averaging operators, orbit sums, subgraph
inclusion coefficients, and transition matrices between orbital layers
naturally arise; these were used in the works of Mnukhin, Pouzet, and Thiery
in connection with reconstruction questions \cite{Mnukhin1992,PT}.

The aim of our work is to study the graph algebra as an object of
representation theory. On the one hand, the algebra $\mathcal A$ has a natural
structure of a $\Gamma$-module, where $\Gamma=S_n^{(2)}$ is the group of
permutations of the edges of the complete graph $K_n$ induced by the
renumbering of its vertices. This action leads to orbit sums and to the
algebra of graph invariants $\mathcal A^\Gamma$. On the other hand, on the
Boolean lattice of edge subsets there naturally act the operators of adding
and deleting one edge, which define an action of the complex Lie algebra
$\mathfrak{sl}_2$ on $\mathcal A$.

This $\mathfrak{sl}_2$-structure is natural in the broader context of graded
partially ordered sets, Lefschetz operators, and the Sperner property. Related
ideas were developed, in particular, in the works of Stanley and Proctor
\cite{Stanley1980,Proctor1982}. Especially close to our approach is
Mnukhin's $\mathfrak{sl}_2$-description of Boolean and related graded posets
\cite{Mnukhin1993}, where raising and lowering operators are used to obtain
combinatorial consequences, in particular the unimodality of rank numbers and
the Sperner property. In this paper we consider the graph-theoretic case of
this mechanism, where the elements of the Boolean poset are subsets of the
edge set of $K_n$, but we shift the emphasis from rank properties to the
representation-theoretic decomposition of the graph algebra, primitive
components, and their compatibility with the action of the pair group
$\Gamma$.

Thus, in this paper the graph algebra is considered as a finite-dimensional
$\mathfrak{sl}_2$-module compatible with the action of the group $\Gamma$.
The task of the paper is to give a representation-theoretic description of
the graph algebra taking both of these structures into account.

We first introduce two natural operators $D_-$ and $D_+$, which correspond
respectively to deleting and adding one edge and thereby define on
$\mathcal A$ the structure of an $\mathfrak{sl}_2$-module. We then describe
the decomposition of $\mathcal A$ into irreducible $\mathfrak{sl}_2$-modules.
A central role in this description is played by the primitive spaces
$$
P_k=\ker(D_-:A_k\to A_{k-1}),
$$
which give the initial components of the corresponding irreducible chains.
As a result, the rank structure of the graph algebra receives a
representation-theoretic interpretation: the subspaces $A_k$, corresponding
to graphs with a fixed number of edges, are described through primitive
components and their $\mathfrak{sl}_2$-generations. Separately, we describe
the module of $\mathfrak{sl}_2$-invariants
$\mathcal A^{\mathfrak{sl}_2}$ and prove that it is zero for odd $m$, while
for even $m$ it coincides with $P_{m/2}$.

As the second step, we study the action of the symmetric group on the edge
coordinates. For $0\le k\le \lfloor m/2\rfloor$, the space $A_k$ is
identified with the permutation $S_m$-module on $k$-element subsets, and the
primitive space $P_k$ has a natural interpretation as the two-row Specht module
of shape $(m-k,k)$. We prove that the algebra of $\Gamma$-invariants has a
monogenic structure and obtain its decomposition
$$
A_r^\Gamma
=
\bigoplus_{k=0}^{\min(r,m-r)}
D_+^{\,r-k}P_k^\Gamma,
$$
which describes each rank subspace of graph invariants through primitive
$\Gamma$-invariant components.

The combination of these two descriptions is carried out through Schur--Weyl
duality. The first step is to establish the tensor interpretation of the graph
algebra as an $\mathfrak{sl}_2$-module:
$$
\mathcal A\cong V_1^{\otimes m},
$$
where $V_1$ is the standard two-dimensional $\mathfrak{sl}_2$-module. It is
then shown that the graph algebra has the decomposition
$$
\mathcal A
\cong
\bigoplus_{k=0}^{\lfloor m/2\rfloor}
S^{(m-k,k)}\otimes V_{m-2k},
$$
where $S^{(m-k,k)}$ is a Specht module and $V_{m-2k}$ is an irreducible
$\mathfrak{sl}_2$-module. This decomposition shows that the
$\mathfrak{sl}_2$-structure and the permutation structure on the edges are not
independent: they are two sides of one tensor description of the algebra
$\mathcal A$. In particular, it gives a representa\-tion-theoretic explanation
of the orbital enumeration of graphs: the dimensions of the invariant
components $A_k^\Gamma$ coincide with the number of non-isomorphic simple
graphs with $n$ vertices and $k$ edges, which leads to a representation-theoretic
interpretation of the Burnside--Polya formula.

The paper is organized as follows. In Section~2 we introduce the main objects:
the pair group, the graph algebra, its rank structure, and the necessary facts
about $\mathfrak{sl}_2$-modules. In Section~3 we construct the
$\mathfrak{sl}_2$-structure on $\mathcal A$ and describe the primitive spaces.
In Section~4 we consider the action of the group $\Gamma$, orbit sums, and the
algebra of invariants. In Section~5 we establish the connection with
Schur--Weyl duality and derive representation-theoretic formulas for counting
graph orbits.

\section{Preliminaries}


In this section we introduce the main combinatorial and algebraic objects
which underlie the subsequent study.

\subsection{The induced edge action of $S_n$ and the pair group}\label{ss2.1}

Let $V=\{1,2,\ldots,n\}$, and let
$$
E_n=V^{(2)}=\{\{i,j\}:1\le i<j\le n\},
$$
be the edge set of the complete graph $K_n$.
Every permutation $\pi\in S_n$ induces a permutation of the set $E_n$ by the
rule
$$
\{i,j\}\mapsto \{\pi(i),\pi(j)\}.
$$
This gives a homomorphism
$$
\varphi:S_n\longrightarrow S_{E_n}\cong S_{m}, \qquad m=\binom{n}{2},
$$
whose image is called the \textit{pair group} $S_n^{(2)}$:
$$
S_n^{(2)}=\varphi(S_n)\subset S_{m},
$$
see also \cite[Chapter 4]{H-P}.

\begin{theorem}\label{phi_iso}
  If $n \ge 3$, then $\varphi : S_n \longrightarrow  S_n^{(2)}$ is an
  isomorphism.
\end{theorem}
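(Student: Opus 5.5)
Since $S_n^{(2)}$ is by definition the image $\varphi(S_n)$, the map $\varphi:S_n\to S_n^{(2)}$ is surjective. We also check that it is a homomorphism: for $\pi,\sigma\in S_n$, the permutation induced by $\pi\sigma$ sends $\{i,j\}$ to $\{\pi\sigma(i),\pi\sigma(j)\}$, which is the result of applying $\varphi(\sigma)$ and then $\varphi(\pi)$. So everything reduces to showing that $\ker\varphi$ is trivial when $n\ge 3$.

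Let $\pi\in\ker\varphi$, so that $\{\pi(i),\pi(j)\}=\{i,j\}$ for every edge $\{i,j\}\in E_n$. Fix a vertex $i$. Because $n\ge 3$, we can choose two distinct vertices $j,k$, both different from $i$. Then
$$
\pi(i)\in\{\pi(i),\pi(j)\}\cap\{\pi(i),\pi(k)\}=\{i,j\}\cap\{i,k\}=\{i\},
$$
where we use that $j\neq k$. Hence $\pi(i)=i$. As $i$ was arbitrary, $\pi=\mathrm{id}$, and $\varphi$ is injective. Put differently, each vertex $i$ is recovered as the unique common element of two distinct edges that contain it, and $\pi$ must preserve this incidence.

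No step here is a real obstacle. The only point needing care is the hypothesis $n\ge3$, which is exactly what guarantees that the two edges through $i$ exist. For $n=2$ the statement fails: $E_2$ has a single edge, so the transposition $(1\,2)$ lies in the kernel and $S_2^{(2)}$ is trivial.
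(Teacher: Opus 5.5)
Your proof is correct and matches the paper's: surjectivity holds by definition of the image, and injectivity comes from intersecting the two fixed edges $\{i,j\}$ and $\{i,k\}$ through a vertex $i$, which uses $n\ge 3$. Checking explicitly that $\varphi$ is a homomorphism, and the remark that the statement fails for $n=2$, are small additions the paper takes for granted.
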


\begin{proof}
  Surjectivity follows from the definition of the image.
  Let us prove that the kernel is trivial.
  Let $\sigma \in \ker\varphi$, that is,
  $\{\sigma(i),\sigma(j)\} = \{i,j\}$ for all $i \ne j$.
  Fix an arbitrary $i \in V$.
  Since $n \ge 3$, there exist $j, l \in V\setminus\{i\}$ with $j \ne l$,
  and therefore
  $$
    \sigma(i) \;\in\; \{i,j\} \cap \{i,l\} \;=\; \{i\}.
  $$
  Hence $\sigma(i)=i$.
  Since $i$ is arbitrary, we have $\sigma=\mathrm{id}$ and
  $\ker\varphi=\{1\}$.
\end{proof}

Thus, for $n\ge 3$, the group $S_n^{(2)}$ is a subgroup of order $n!$ of the
group of permutations of the edge set of the complete graph $K_n$, induced by
the action of the symmetric group $S_n$ on its vertices; see also
\cite[Chapter~4]{H-P}.

\subsection{The graph algebra $\mathcal A$ and its rank structure}


For each pair $1\le i<j\le n$, define the edge coordinate function $x_{ij}$
on the set of all simple graphs on the vertex set $V$ by the rule
$$
x_{ij}(G)=
\begin{cases}
1,& \{i,j\}\in E(G),\\
0,& \{i,j\}\notin E(G),
\end{cases}
$$
where $E(G)\subseteq V^{(2)}$ is the edge set of the graph $G$.
The coordinate functions form the algebra of polynomial functions on the set
of all graphs on $n$ vertices.

Define the action of the group $S_n^{(2)}$ on the edge coordinates in the
standard way:
$$
\sigma \cdot x_{i\,j} = x_{\sigma^{-1}(i)\,\sigma^{-1}(j)}, \quad \sigma \in S_n^{(2)}.
$$

In order to avoid double-index notation for coordinate functions, fix the
lexicographic order on the set of all pairs of vertices $V^{(2)}$ and number
the corresponding edge coordinate functions by a single index:
$$
\{x_1,x_2,\ldots,x_m\}=\{x_{12},x_{13},\ldots,x_{n-1,n}\}.
$$
After this relabelling, each edge of a graph is identified with some index
$i\in [m]=\{1,2,\ldots,m\}$, and the edge set $E(G)$ of an arbitrary graph
on $n$ vertices will be regarded as a subset of $[m]$.
Every renumbering of the vertices induces a renumbering of the edges, that is,
a permutation of the set $[m]$; hence the pair group $S_n^{(2)}$ acts on the
set of edge indices.
We shall regard this induced permutation group as a subgroup of the symmetric
group $S_m$ and will denote it below by $\Gamma$; of course,
$\Gamma\cong S_n^{(2)}$.

From the functional point of view, for simple unweighted graphs each variable
$x_i$ is Boolean, and therefore the identity $x_i^2=x_i$ holds on the set
$\{0,1\}^m$ for all $i$.
Thus it is natural to consider the quotient algebra
$$
\mathcal A=\mathbb{C}[x_1,\dots,x_m]/(x_1^2-x_1,\dots,x_m^2-x_m),
$$
which is called the \emph{graph algebra} on $n$ vertices and will be the main
object of our study.

As a vector space, $\mathcal A$ has dimension $2^m$, and its natural basis is
formed by the classes of squarefree monomials
$$
\boldsymbol{x}_S:=\prod_{e\in S}x_e,\qquad S\subseteq [m].
$$
Multiplication in $\mathcal A$ is given by the rule
$
\boldsymbol{x}_S \boldsymbol{x}_T=\boldsymbol{x}_{S\cup T}.
$
The algebra $\mathcal A$ is not graded, but it has a natural rank structure by
degree:
$$
\mathcal A=\bigoplus_{k=0}^m A_k,
$$
where
$
A_k:=\operatorname{span}\{\boldsymbol{x}_S:\ |S|=k\}, \dim A_k=\binom{m}{k}.
$

\medskip
Let us show that the basis monomials in $\mathcal A$ are naturally
interpreted as simple graphs.
The value of the monomial $\boldsymbol{x}_S$ on a graph $G$ is computed as the
product of the values of the corresponding edge variables:
$$
\boldsymbol{x}_S(G):=\prod_{e\in S} x_e(G).
$$
Since $x_e(G)\in\{0,1\}$, the condition $\boldsymbol{x}_S(G)=1$ is equivalent
to the fact that \emph{all} edges from $S$ are present in $G$, that is,
$S\subseteq E(G)$, while if at least one edge from $S$ is absent from $G$, then
$\boldsymbol{x}_S(G)=0$.
In particular, for every edge set $S$ there exists a unique graph $G_S$ with
edge set $E(G)=S$ on which $\boldsymbol{x}_S(G_S)=1$.
Therefore, below we shall identify graphs with the corresponding basis
monomials $\boldsymbol{x}_S$ of the algebra $\mathcal A$.
Under this identification, the complementary graph $\overline{G}$ corresponds
to the monomial $\boldsymbol{x}_{\overline{S}}$, where
$\overline{S}=[m]\setminus S$.

\begin{example}\rm
For $n=4$ we have $m=6$ and fix the following relabelling of the edge
variables:
$$
\{x_1,x_2,x_3,x_4,x_5,x_6\}
=\{x_{12},x_{13},x_{14},x_{23},x_{24},x_{34}\},
$$
that is,
\begin{gather}\label{perepoz}
x_{12}\mapsto x_1,\quad x_{13}\mapsto x_2,\quad x_{14}\mapsto x_3,\quad
x_{23}\mapsto x_4,\quad x_{24}\mapsto x_5,\quad x_{34}\mapsto x_6.
\end{gather}
The basis of the algebra $\mathcal A$ as a vector space consists of all
squarefree monomials
$$
\boldsymbol{x}_S=\prod_{i\in S}x_i,\qquad S\subseteq [6]=\{1,2,3,4,5,6\},
$$
that is, of $2^6$ elements in total:
$$
1,\ x_1,\ldots,x_6,\ x_1x_2,\ldots,\ x_1x_2x_3x_4x_5x_6.
$$
Multiplication in $\mathcal A$ is governed by the idempotent relations
$x_i^2=x_i$; for example,
$$
(x_1+x_2+x_3)^2
=x_1^2+x_2^2+x_3^2+2\,(x_1 x_2+x_1 x_3+x_2 x_3)
=
x_1+x_2+x_3+2(x_1 x_2+x_1 x_3+x_2 x_3).
$$
The cycle $C_4$ on the vertices $\{1,2,3,4\}$ with edges
$\{\{1,2\},\{2,3\},\{3,4\},\{1,4\}\}$, which in our numbering correspond
to the indices $\{1,4,6,3\}$, is identified with the monomial
$x_1x_3x_4x_6$.
$\triangle$ \end{example}

\medskip
As we see, the graph algebra admits two complementary viewpoints. On the one
hand, it is an algebra of squarefree polynomials; on the other hand,
$\mathcal A$ is canonically identified with the algebra of all pseudo-Boolean
functions on the Boolean cube.
In what follows, we shall consider the graph algebra from these two points of
view whenever needed.

The ring structure of the algebra $\mathcal A$ is very simple: since the
characteristic of the field $\mathbb C$ is zero, the algebra $\mathcal A$ is a
finite-dimensional commutative semisimple (etale) algebra and is isomorphic to
the direct product $\mathbb C^{2^m}$; hence, by the primitive element theorem,
it is generated by one element~\cite[Theorem~2.2]{Dobbs1999}, see
also~\cite[Chapters~II, V]{LangAlgebra}.

In the next theorem this generating element is found explicitly.

\begin{theorem}\label{A_monogenic}
The algebra
$
\mathcal A
$
is monogenic as an abstract $\mathbb C$-algebra and is generated by the element
$$
z=x_1+2x_2+4x_3+\cdots+2^{m-1}x_m,
$$
that is,
$
\mathcal A=\mathbb C[z].
$
\end{theorem}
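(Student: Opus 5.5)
We view $\mathcal A$ as the algebra of all complex-valued functions on the Boolean cube $\{0,1\}^m$. The classes of the squarefree monomials $\boldsymbol{x}_S$ form a basis of $\mathcal A$. Evaluating at points is an algebra homomorphism $\mathcal A\to\mathbb C^{\{0,1\}^m}$, since the relations $x_i^2=x_i$ hold pointwise. This map is an isomorphism: by Möbius inversion on the Boolean lattice, the indicator function of a point $a$ equals
$$
\delta_a=\prod_{i:\,a_i=1}x_i\prod_{i:\,a_i=0}(1-x_i),
$$
which lies in $\mathcal A$. So the images span the $2^m$-dimensional target, and the dimensions match. The plan is therefore to show that $z$ separates the points of the cube and then to use Lagrange interpolation.

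First, we compute the value of $z$ at a point $a=(a_1,\dots,a_m)\in\{0,1\}^m$:
$$
z(a)=\sum_{i=1}^m 2^{i-1}a_i .
$$
This is exactly the integer whose binary digits are $a_1,\dots,a_m$. By uniqueness of binary expansions, the map $a\mapsto z(a)$ is a bijection from $\{0,1\}^m$ onto $\{0,1,\dots,2^m-1\}$. Thus $z$ takes $2^m$ pairwise distinct values.

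Next, for each point $a$ set $c=z(a)$ and define
$$
L_c(t)=\prod_{d\ne c,\;0\le d<2^m}\frac{t-d}{c-d}\in\mathbb C[t].
$$
Because the product rule in $\mathcal A$ is pointwise multiplication of functions, $L_c(z)\in\mathbb C[z]$ is the function whose value at a point $b$ is $L_c(z(b))$. This value is $1$ if $z(b)=c$, that is, if $b=a$, and $0$ otherwise. Hence $L_c(z)=\delta_a$.

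Since the $\delta_a$ span $\mathcal A$, we conclude that $\mathbb C[z]=\mathcal A$. As a cross-check, the minimal polynomial of $z$ is $\prod_{d=0}^{2^m-1}(t-d)$, which has degree $2^m=\dim\mathcal A$.

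The main step needing care is the identification of $\mathcal A$ with the full function algebra, that is, the claim that $\delta_a\in\mathcal A$. The product formula for $\delta_a$ settles it directly, with no appeal to the primitive element theorem. Everything else is the binary-expansion injectivity.
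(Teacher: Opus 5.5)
Your proof is correct and follows essentially the same route as the paper: $z$ takes the binary code of each point of the cube as its value, so it separates points, and Lagrange polynomials in $z$ then give the characteristic functions of points, which span $\mathcal A$. The one addition is your explicit formula for $\delta_a$ together with the dimension count, which justifies the identification $\mathcal A\cong\operatorname{Fun}(\{0,1\}^m,\mathbb C)$; the paper uses this identification without proof.
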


\begin{proof}
Consider the element $z$ as a function from
$
\operatorname{Fun}(\{0,1\}^m,\mathbb C).
$
To each point
$$
\varepsilon=(\varepsilon_1,\dots,\varepsilon_m)\in\{0,1\}^m,
$$
there corresponds the value
$$
z(\varepsilon)
=
\varepsilon_1+2\varepsilon_2+4\varepsilon_3+\cdots+2^{m-1}\varepsilon_m.
$$
This is the binary code of the point $\varepsilon$, and hence all values
$z(\varepsilon)$ are pairwise distinct.

For each basis monomial $x_S$, put
$$
N(S):=\sum_{i\in S}2^{i-1}.
$$
Then $z$ takes the value $N(S)$ at the point corresponding to the subset $S$.
For each $S$, construct the Lagrange polynomial
$$
L_S(t):=
\prod_{\substack{T\subseteq[m]\\T\neq S}}
\frac{t-N(T)}{N(S)-N(T)}.
$$
After substituting $t=z$, we obtain an element
$
L_S(z)\in \mathbb C[z],
$
which is equal to $1$ at the point $S$ and to $0$ at all other points of the
Boolean cube.

Thus, all characteristic functions of points of the Boolean cube belong to
$\mathbb C[z]$.
Since these characteristic functions form a basis of the algebra
$
\operatorname{Fun}(\{0,1\}^m,\mathbb C)\cong \mathcal A,
$
we have
$
\mathcal A=\mathbb C[z].
$
\end{proof}

\begin{example}{\rm
For $n=3$, direct computations give
\begin{gather*}
x_1={\frac {1}{315}}\,z \left( z-6 \right)  \left( z-2 \right)  \left( z-4
 \right)  \left( 4\,{z}^{3}-50\,{z}^{2}+176\,z-151 \right) ,\\
x_2= {\frac {1}{1260}}\,z \left( z-1 \right)  \left( z-4 \right)  \left( z-
5 \right)  \left( 2\,{z}^{3}-29\,{z}^{2}+107\,z-9 \right) ,\\
x_3=-{\frac {1}{2520}}\,z \left( z-1 \right)  \left( z-2 \right)  \left( z
-3 \right)  \left( 10\,{z}^{3}-185\,{z}^{2}+1139\,z-2341 \right),
\end{gather*}
where $z=x_1+2 x_2 +4 x_3$.
}
$\triangle$ \end{example}

In the general case, the expressions for the basic variables are very
cumbersome polynomials in $z$ of degree $2^{m-1}$, which require additional
reduction modulo the ideal
$I=(x_1^2-x_1, \ldots, x_m^2-x_m).$


\subsection{The Lie algebra $\mathfrak{sl}_2$
  and its standard irreducible modules}

In this subsection we briefly recall standard facts about the Lie algebra
$\mathfrak{sl}_2$ and its finite-dimensional modules, on which the subsequent
analysis of the graph algebra~$\mathcal{A}$ is based.
All the statements given below are classical; their proofs can be found in
\cite[Chapter~11]{FH} or in \cite[Chapters~7--8]{Humphreys}.
All vector spaces are considered over the field $\mathbb{C}$.

\medskip

The Lie algebra $\mathfrak{sl}_2 = \mathfrak{sl}_2(\mathbb{C})$ is the space
of all $2 \times 2$ matrices of trace zero over~$\mathbb{C}$:
$$
  \mathfrak{sl}_2
  = \bigl\{X \in M_2(\mathbb{C}) : \operatorname{tr}(X) = 0\bigr\},
$$
where the Lie bracket is given by the matrix commutator
$[X, Y] = XY - YX$.
This algebra is three-dimensional and has the standard basis
$$
  e_+ = \begin{pmatrix} 0 & 1 \\ 0 & 0 \end{pmatrix}, \qquad
  e_- = \begin{pmatrix} 0 & 0 \\ 1 & 0 \end{pmatrix}, \qquad
  h   = \begin{pmatrix} 1 & 0 \\ 0 & -1 \end{pmatrix},
$$
which satisfies the commutation relations
\begin{equation}\label{sl2_relations}
  [h, e_+] = 2e_+, \qquad
  [h, e_-] = -2e_-, \qquad
  [e_+, e_-] = h.
\end{equation}

\medskip

A \emph{representation} of the Lie algebra $\mathfrak{sl}_2$ in a vector
space~$V$ is a homomorphism of Lie algebras
$\rho : \mathfrak{sl}_2 \to \operatorname{End}(V)$;
in this case $V$ is called an \emph{$\mathfrak{sl}_2$-module}.
The elements $e_+$ and $e_-$ are called, respectively, the \emph{raising
operator} and the \emph{lowering operator}; in what follows, instead of
$\rho(e_\pm)$ and $\rho(h)$ we write the same symbols.

A nonzero vector $v \in V$ is called a \emph{weight vector of weight}
$\lambda \in \mathbb{C}$ if $hv = \lambda v$; the corresponding subspace
$V_\lambda = \ker(h - \lambda \cdot \mathrm{id}) \subseteq V$
is called the \emph{weight subspace of weight~$\lambda$}.
It follows directly from the relations~\eqref{sl2_relations} that
$$
  e_\pm V_\lambda \subseteq V_{\lambda \pm 2},
$$
that is, $e_+$ raises the weight by~$2$, while $e_-$ lowers it by~$2$.
A nonzero vector $v_0 \in V_\lambda$ satisfying $e_- v_0 = 0$ is called a
\emph{lowest weight vector}; the number $\lambda$ is called the
\emph{lowest weight} of the given module.

\medskip

An $\mathfrak{sl}_2$-module $V$ is called \emph{irreducible} if it contains
no nontrivial proper $\mathfrak{sl}_2$-submodules.
For each integer $d \ge 0$ there exists a standard irreducible
$\mathfrak{sl}_2$-module~$V_d$ of dimension $d+1$ with basis
$v_0, v_1, \dots, v_d$, on which the action is given by the formulas:
\begin{gather*}
  e_+ v_i = (d-i)\, v_{i+1}, \quad
  h\, v_i = (2i-d)\, v_i, \quad 
  e_- v_i = i\, v_{i-1},
\end{gather*}
where $v_{-1} = v_{d+1} = 0$.
The vector $v_0$ is a lowest weight vector of weight~$-d$, and $v_d$ is a
highest weight vector of weight~$d$.
The ordered weights of the module~$V_d$ form the arithmetic progression
$$
  -d,\; -d+2,\; \ldots,\; d-2,\; d,
$$
and each weight subspace is one-dimensional.
In particular, the lowest weight~$-d$ uniquely determines the irreducible
module of dimension $d+1$.

The next two theorems record standard structural facts about
finite-dimensional $\mathfrak{sl}_2$-modules over an algebraically closed
field of characteristic zero.

\begin{theorem}
  Every finite-dimensional irreducible $\mathfrak{sl}_2$-module over a field
  of characteristic zero is isomorphic to exactly one of the standard
  modules~$V_d$, $d \in \mathbb{Z}_{\ge 0}$.
\end{theorem}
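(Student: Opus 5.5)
The proof follows the standard highest-weight argument, using only the relations \eqref{sl2_relations} and algebraic closedness of $\mathbb C$. Let $V$ be a finite-dimensional irreducible $\mathfrak{sl}_2$-module.

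\textbf{Step 1: a lowest weight vector exists.} Since $\mathbb C$ is algebraically closed and $V\neq 0$ is finite-dimensional, $h$ has an eigenvector $w$ with eigenvalue $\mu$. The relation $e_-V_\lambda\subseteq V_{\lambda-2}$ shows that the vectors $w, e_-w, e_-^2w,\dots$ are either zero or weight vectors of the distinct weights $\mu,\mu-2,\mu-4,\dots$. Eigenvectors with distinct eigenvalues are linearly independent and $\dim V<\infty$, so some $e_-^{k}w=0$. Choosing $k$ minimal, $u_0:=e_-^{k-1}w$ is a lowest weight vector, of some weight $\lambda$.

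\textbf{Step 2: the raising chain.} Put $u_j:=e_+^j u_0$. Then $hu_j=(\lambda+2j)u_j$. Using $[e_+,e_-]=h$, induction on $j$ gives
$$
e_-u_j=-j(\lambda+j-1)\,u_{j-1}.
$$
This is the only real computation, and the one place where care is needed: one has to track the telescoping sum of weights $\sum_{i=0}^{j-1}(\lambda+2i)$ correctly.

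By the same independence argument, there is a least $d$ with $u_{d+1}=0$ and $u_0,\dots,u_d$ nonzero and linearly independent. Their span is stable under $h$, $e_+$ and $e_-$, so it is a nonzero submodule. By irreducibility it equals $V$, hence $\dim V=d+1$.

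\textbf{Step 3: the weight is forced.} Apply the formula with $j=d+1$:
$$
0=e_-u_{d+1}=-(d+1)(\lambda+d)\,u_d .
$$
Since $u_d\neq 0$ and $\operatorname{char}=0$, this gives $\lambda=-d$.

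\textbf{Step 4: identification with $V_d$.} Match the basis $u_j$ with the basis $v_j$ of $V_d$ by setting $v_j:=u_j/(d(d-1)\cdots(d-j+1))$. The scalars are nonzero for $j\le d$. A direct check then reproduces the defining formulas
$$
e_+v_i=(d-i)v_{i+1},\qquad e_-v_i=i\,v_{i-1},\qquad hv_i=(2i-d)v_i .
$$
The linear map sending this basis of $V$ to the standard basis of $V_d$ therefore intertwines $h$, $e_+$ and $e_-$, and so is an isomorphism $V\cong V_d$.

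\textbf{Step 5: uniqueness.} If $V_d\cong V_{d'}$, their dimensions agree, so $d+1=d'+1$ and $d=d'$. Equivalently, the lowest weights $-d$ and $-d'$ coincide, as already remarked in the text. Hence $V$ is isomorphic to exactly one of the $V_d$.
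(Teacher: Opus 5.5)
Your argument is correct and is the classical one the paper defers to (Fulton--Harris, Ch.~11; Humphreys, \S 7), run from a lowest rather than a highest weight vector to match the paper's normalization of $V_d$; the recursion $e_-u_j=-j(\lambda+j-1)u_{j-1}$, the conclusion $\lambda=-d$, and the rescaling $v_j=u_j/\bigl(d(d-1)\cdots(d-j+1)\bigr)$ all check out. Step~1 uses algebraic closedness to get an eigenvector of $h$, which matches the paper's standing assumption that everything is over $\mathbb C$; for an arbitrary field of characteristic zero you would first need to show that the eigenvalues of $h$ lie in the field.
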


\begin{theorem}\label{complete_reducibility_sl2}
  Every finite-dimensional $\mathfrak{sl}_2$-module over a field of
  characteristic zero is isomorphic to a direct sum of irreducible
  submodules:
  $$
    V \cong \bigoplus_{d \ge 0} m_d\, V_d,
  $$
  where the multiplicities $m_d \in \mathbb{Z}_{\ge 0}$, almost all of which
  are zero, are uniquely determined.
\end{theorem}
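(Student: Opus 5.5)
We prove complete reducibility (Theorem~\ref{complete_reducibility_sl2}) by Weyl's classical argument through the Casimir element, in three steps: reduce to codimension-one submodules, split those with the Casimir, then read off multiplicities from weights. The standard modules $V_d$ and the classification of irreducibles (the preceding theorem) are taken as given.

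First we reduce to one claim: every submodule $W\subseteq V$ has an $\mathfrak{sl}_2$-stable complement. Granting this, induction on $\dim V$ gives the decomposition. If $V$ is irreducible we are done. Otherwise pick a proper nonzero submodule $W$, write $V=W\oplus W'$, and apply induction to $W$ and $W'$. Each irreducible summand is then some $V_d$ by the classification theorem.

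To find the complement, we work with the module $\operatorname{Hom}(V,W)$, with $\mathfrak{sl}_2$ acting by $(x\cdot f)(v)=x f(v)-f(xv)$. Consider
$$
\mathcal V=\{f\in\operatorname{Hom}(V,W): f|_W \text{ is scalar}\},\qquad \mathcal W=\{f: f|_W=0\}.
$$
Both are submodules, and $\mathcal W$ has codimension one in $\mathcal V$ with trivial quotient. Suppose we can split $\mathcal V=\mathcal W\oplus\mathbb C f_0$ as modules, normalized so that $f_0|_W=\mathrm{id}$. Then $f_0$ is $\mathfrak{sl}_2$-invariant, so it is a module projection $V\to W$, and $\ker f_0$ is the required complement. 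So everything reduces to splitting off a trivial one-dimensional quotient, i.e.\ to the case $V/W\cong\mathbb C$ (trivial module).

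This codimension-one case is the main obstacle, and we use the Casimir element $C=e_+e_-+e_-e_++\tfrac12 h^2$. A check with \eqref{sl2_relations} shows $C$ commutes with the action. On $V_d$ it acts by the scalar $\tfrac12 d(d+2)$, which is zero only for $d=0$; on the trivial quotient it acts by $0$. We do induction on $\dim W$.

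If $W$ is reducible, take a proper nonzero submodule $W_1\subset W$. Apply induction to $W/W_1\subset V/W_1$ to get a line complement $\tilde W/W_1$, then apply induction again to $W_1\subset\tilde W$.

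If $W$ is irreducible, then $W\cong V_d$ by the classification theorem. Since $C$ acts by $0$ on $V/W$, we have $C(V)\subseteq W$. If $d>0$, then $C|_W$ is invertible, so $\ker C$ is a one-dimensional submodule meeting $W$ trivially, which gives the complement. If $d=0$, then $V$ is two-dimensional. The derived algebra of $\mathfrak{sl}_2$ is all of $\mathfrak{sl}_2$, and it acts on the two-dimensional $V$ by strictly upper triangular matrices, whose commutators vanish. Hence $\mathfrak{sl}_2$ acts by zero on $V$ and any complementary line works.

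Finally, uniqueness of the multiplicities $m_d$ comes from the weights. Since $h$ acts diagonalizably on each $V_d$, the multiplicity $m_d$ equals
$$
m_d=\dim V_{(d)}-\dim V_{(d+2)},
$$
where $V_{(\lambda)}$ denotes the weight space of weight $\lambda$ in $V$. Each weight of $V_d$ occurs exactly once, so every summand $V_{d'}$ with $d'\ge d$ contributes one dimension to $V_{(d)}$, and the difference isolates the summands with $d'=d$. This expression depends only on $V$, not on the chosen decomposition.
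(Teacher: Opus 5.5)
Your proof is correct. It is the standard algebraic Weyl argument: the $\operatorname{Hom}(V,W)$ reduction to a trivial codimension-one quotient, splitting via the Casimir element with the $d=0$ case handled separately, and uniqueness from weight-space dimensions. The paper does not prove this statement itself but refers to the classical treatments it cites (Fulton--Harris, Humphreys), where this is the proof given. One cosmetic correction: a summand $V_{d'}$ contributes to $V_{(d)}$ only when $d'\ge d$ and $d'\equiv d \pmod 2$, but the same parity condition governs $V_{(d+2)}$, so your formula $m_d=\dim V_{(d)}-\dim V_{(d+2)}$ is unaffected.
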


\medskip
The decomposition of a module~$V$ into irreducible submodules is practically
found using elements of the kernel $\ker e_-$ of the operator $e_-$.
Each nonzero vector $v_0 \in \ker e_- \cap V_{d}$ generates an irreducible
submodule
$\langle v_0, e_+v_0, e_+^2v_0, \dots \rangle \cong V_d$;
the multiplicity $m_d$ is equal to the dimension of the corresponding weight
subspace:
$$
  m_d = \dim(\ker e_- \cap V_{d}).
$$
Thus, the complete information about the decomposition is encoded in the
structure of $\ker e_-$.

\medskip

In the next section we define on the graph algebra~$\mathcal{A}$ the
structure of an $\mathfrak{sl}_2$-module.

\section{The graph algebra $\mathcal A$ as an $\mathfrak{sl}_2$-module}

In this section we show that the graph algebra $\mathcal A$ is naturally
endowed with the structure of a finite-dimensional $\mathfrak{sl}_2$-module.
The idea of this construction is quite simple: if a squarefree monomial
$\boldsymbol{x}_S$ is interpreted as a graph with edge set $S\subseteq[m]$,
then the natural operations on such monomials are adding one new edge,
deleting one existing edge, and recording the number of edges.
These three operations lead to operators which, as it turns out, satisfy the
standard commutation relations of the algebra $\mathfrak{sl}_2$.
As a result, the rank structure of the algebra $\mathcal A$ is compatible
with its weight decomposition, and the problem of describing $\mathcal A$
is reduced to classical questions in the representation theory of
$\mathfrak{sl}_2$.

\subsection{The action of the algebra $\mathfrak{sl}_2$ on $\mathcal {A}$}

Consider on $\mathcal {A}$ three linear operators defined on basis monomials as follows:
\begin{gather*}
D_+(\boldsymbol{x}_S)=\sum_{e\in\overline S}\boldsymbol{x}_{S\cup\{e\}}, D_+(\boldsymbol{x}_{[m]})=0,\\
D_0(\boldsymbol{x}_S)=-(m-2|S|)\,\boldsymbol{x}_S,\\
D_-(\boldsymbol{x}_S)=\sum_{e\in S}\boldsymbol{x}_{S\setminus\{e\}},D_-(\boldsymbol{x}_\varnothing)=0.
\end{gather*}

All three operators are compatible with the rank structure on $\mathcal A:$
$$
D_+:A_k\to A_{k+1},\qquad
D_-:A_k\to A_{k-1},\qquad
D_0:A_k\to A_k.
$$

We now show that these three operators $D_+,D_0,D_-$ define the standard
action of the algebra $\mathfrak{sl}_2$ on $\mathcal A$.
\begin{theorem}
The operators $D_+,D_0,D_-$ satisfy the relations
\begin{equation}\label{sl2_on_A_relations}
[D_+,D_-]=D_0,\qquad
[D_0,D_+]=2D_+,\qquad
[D_0,D_-]=-2D_-,
\end{equation}
and define on the algebra $\mathcal A$ the structure of an
$\mathfrak{sl}_2$-module.
\end{theorem}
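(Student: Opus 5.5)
Since all three operators are linear, it suffices to check each relation in \eqref{sl2_on_A_relations} on the basis monomials $\boldsymbol{x}_S$, $S\subseteq[m]$. I will do this by direct combinatorial expansion on the Boolean lattice. The two relations involving $D_0$ are immediate once we use that $D_0$ acts on $A_k$ by the scalar $-(m-2k)$ and that $D_\pm$ shift the rank by $\pm1$. Concretely, for $|S|=k$ we have $D_+\boldsymbol{x}_S\in A_{k+1}$, hence
$$
[D_0,D_+]\boldsymbol{x}_S=\bigl(-(m-2k-2)+(m-2k)\bigr)D_+\boldsymbol{x}_S=2D_+\boldsymbol{x}_S.
$$
The computation for $[D_0,D_-]=-2D_-$ is symmetric. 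The boundary conventions $D_+\boldsymbol{x}_{[m]}=0$ and $D_-\boldsymbol{x}_\varnothing=0$ are consistent with the general formulas, since the sums are then empty, so no separate treatment is needed.

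The main step is $[D_+,D_-]=D_0$. Take $|S|=k$ and expand both compositions:
$$
D_+D_-\boldsymbol{x}_S=\sum_{e\in S}\ \sum_{f\notin S\setminus\{e\}}\boldsymbol{x}_{(S\setminus\{e\})\cup\{f\}},\qquad
D_-D_+\boldsymbol{x}_S=\sum_{f\notin S}\ \sum_{e\in S\cup\{f\}}\boldsymbol{x}_{(S\cup\{f\})\setminus\{e\}}.
$$
In each double sum I will split off the diagonal terms $f=e$, which return $\boldsymbol{x}_S$.

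In $D_+D_-$ these diagonal terms give $k\,\boldsymbol{x}_S$, one for each $e\in S$. In $D_-D_+$ they give $(m-k)\,\boldsymbol{x}_S$, one for each $f\notin S$.

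The remaining off-diagonal terms are indexed in both cases by ordered pairs $(e,f)$ with $e\in S$ and $f\notin S$, and each contributes the monomial $\boldsymbol{x}_{(S\setminus\{e\})\cup\{f\}}$. The bookkeeping point to verify is that this is a bijection of index sets. In the second sum, the terms $e\in S\cup\{f\}$ with $e\neq f$ are exactly $e\in S$, and in the first, $f\notin S\setminus\{e\}$ with $f\neq e$ is exactly $f\notin S$. So the off-diagonal parts coincide and cancel in the commutator, leaving
$$
[D_+,D_-]\boldsymbol{x}_S=\bigl(k-(m-k)\bigr)\boldsymbol{x}_S=-(m-2k)\boldsymbol{x}_S=D_0\boldsymbol{x}_S.
$$

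Finally, I match $e_+\mapsto D_+$, $e_-\mapsto D_-$, $h\mapsto D_0$. The relations just proved are exactly \eqref{sl2_relations}, so this assignment extends linearly to a Lie algebra homomorphism $\mathfrak{sl}_2\to\operatorname{End}(\mathcal A)$, which makes $\mathcal A$ an $\mathfrak{sl}_2$-module.

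The only genuine obstacle is the cancellation of the off-diagonal terms in $[D_+,D_-]$, and the bijection of index pairs above handles it. The sign convention in $D_0$ deserves a check: it is chosen so that $A_0$ has weight $-m$, the lowest weight, and this is consistent with $D_-$ lowering the rank.
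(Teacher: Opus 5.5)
Your proposal is correct and follows essentially the same route as the paper's proof. Both reduce to basis monomials and dispose of $[D_0,D_\pm]$ via the rank shift of $D_\pm$. Both then prove $[D_+,D_-]=D_0$ by splitting off the diagonal contributions $k\,\boldsymbol{x}_S$ and $(m-k)\,\boldsymbol{x}_S$, and cancelling the common off-diagonal sum over pairs $(e,f)\in S\times\overline S$.
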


\begin{proof}
It is enough to check the relations \eqref{sl2_on_A_relations} on basis
monomials $\boldsymbol{x}_S, |S|=k$, and then use linearity.

\smallskip

First we verify the relation
$
[D_0,D_+]=2D_+.
$
Every monomial in the sum
$$
D_+(\boldsymbol{x}_S)=\sum_{e\in\overline S}\boldsymbol{x}_{S\cup\{e\}},
$$
has degree $k+1$, and therefore
$$
D_0(D_+(\boldsymbol{x}_S))=-(m-2(k+1))D_+(\boldsymbol{x}_S).
$$
On the other hand,
$$
D_+(D_0(\boldsymbol{x}_S))=D_+\bigl(-(m-2k)\boldsymbol{x}_S\bigr)=-(m-2k)D_+(\boldsymbol{x}_S).
$$
Hence
$$
[D_0,D_+](\boldsymbol{x}_S)
=
D_0(D_+(\boldsymbol{x}_S))-D_+(D_0(\boldsymbol{x}_S))
=
\bigl(-(m-2k-2)+(m-2k)\bigr)D_+(\boldsymbol{x}_S)
=
2D_+(\boldsymbol{x}_S).
$$

The relation $[D_0,D_-]=-2D_-$ is verified similarly:
$$
[D_0,D_-](\boldsymbol{x}_S)
=
D_0(D_-(\boldsymbol{x}_S))-D_-(D_0(\boldsymbol{x}_S))
=
\bigl(-(m-2k+2)+(m-2k)\bigr)D_-(\boldsymbol{x}_S)
=
-2D_-(\boldsymbol{x}_S).
$$

It remains to verify the last relation
$
[D_+,D_-]=D_0.
$
We have
$$
D_-D_+(\boldsymbol{x}_S)
=
\sum_{f\in\overline S}D_-(\boldsymbol{x}_{S\cup\{f\}}).
$$
For a fixed $f\in\overline S$ we obtain
$$
D_-(\boldsymbol{x}_{S\cup\{f\}})
=
\sum_{e\in S\cup\{f\}}\boldsymbol{x}_{(S\cup\{f\})\setminus\{e\}}
=
\boldsymbol{x}_S+\sum_{e\in S}\boldsymbol{x}_{(S\setminus\{e\})\cup\{f\}}.
$$
Therefore,
$$
D_-D_+(\boldsymbol{x}_S)
=
(m-k)\boldsymbol{x}_S+\sum_{f\in\overline S}\sum_{e\in S}\boldsymbol{x}_{(S\setminus\{e\})\cup\{f\}}.
$$

On the other hand,
$$
D_-(\boldsymbol{x}_S)=\sum_{e\in S}\boldsymbol{x}_{S\setminus\{e\}},
$$
and hence
$$
D_+D_-(\boldsymbol{x}_S)
=
\sum_{e\in S}D_+(\boldsymbol{x}_{S\setminus\{e\}}).
$$
For a fixed $e\in S$, the set of edges absent from $S\setminus\{e\}$ is
$$
[m]\setminus (S\setminus\{e\})=\overline S\cup\{e\},
$$
and therefore
\begin{equation}\label{r2}
D_+(\boldsymbol{x}_{S\setminus\{e\}})
=
\boldsymbol{x}_S+\sum_{f\in\overline S}\boldsymbol{x}_{(S\setminus\{e\})\cup\{f\}}.
\end{equation}
It follows that
$$
D_+D_-(\boldsymbol{x}_S)
=
k\boldsymbol{x}_S+\sum_{e\in S}\sum_{f\in\overline S}\boldsymbol{x}_{(S\setminus\{e\})\cup\{f\}}.
$$

The double sums in the last two formulas coincide, since they are taken over
the same pairs
$$
(e,f)\in S\times\overline S.
$$
Thus, after subtraction, they cancel each other, and we obtain
$$
[D_+,D_-](\boldsymbol{x}_S)
=
D_+D_-(\boldsymbol{x}_S)-D_-D_+(\boldsymbol{x}_S)
=
(2k-m)\boldsymbol{x}_S
=
-(m-2 |S|)\boldsymbol{x}_S
=
D_0(\boldsymbol{x}_S).
$$

Hence all relations \eqref{sl2_on_A_relations} hold on the basis, and
therefore on the whole algebra $\mathcal A$.
\end{proof}
The combinatorial meaning of the operators $D_+,D_-,D_0$ is completely
transparent: $D_+$ runs through all one-edge supergraphs of a given graph,
$D_-$ runs through all its one-edge subgraphs, and $D_0$ records the number
of edges.
It is precisely this interpretation that makes the constructed
$\mathfrak{sl}_2$-structure natural from the point of view of graph theory.

\subsection{Primitive elements}

The elements of the algebra $\mathcal A$ which are annihilated by the operator
$D_-$ form the subspace
$$
\ker D_-=\{f\in\mathcal A:\ D_-f=0\}.
$$
The kernel of the operator $D_-$ is a rank subspace in $\mathcal A$
$$
\ker D_-= P_0+P_1+ \cdots + P_m,
$$
where
$$
P_k=\ker\left(D_-:A_k\longrightarrow A_{k-1}\right),
\qquad A_{-1}=0.
$$
Elements of the space $P_k$ will be called \emph{primitive elements of degree}
$k$.

The following theorem shows that primitive elements are initial vectors of
irreducible $\mathfrak{sl}_2$-chains and that such elements can appear only
in the lower half of the algebra.
\begin{theorem}\label{primitive_chain}
Let $0\neq z\in P_k$. If $k\le \lfloor m/2\rfloor$, then the elements
$$
z,\ D_+(z),\ D_+^2(z),\dots,\ D_+^{m-2k}(z),
$$
generate an irreducible $\mathfrak{sl}_2$-module isomorphic to $V_{m-2k}$.
In particular,
$$
D_+^{m-2k}(z)\neq 0,
\qquad
D_+^{m-2k+1}(z)=0.
$$
If $k>\lfloor m/2\rfloor$, then $P_k=0$.
\end{theorem}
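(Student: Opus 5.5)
The plan is to derive everything from the commutation relations \eqref{sl2_on_A_relations} by the standard lowest-weight computation. Let $0\neq z\in P_k$. Since $z\in A_k$, we have $D_0 z=\lambda z$ with $\lambda=-(m-2k)=2k-m$. Put $z_j=D_+^j(z)\in A_{k+j}$, so that $D_0 z_j=(\lambda+2j)z_j$ by the relation $[D_0,D_+]=2D_+$. By induction on $j$, using $[D_+,D_-]=D_0$ and $D_-z=0$, we get
$$
D_-z_{j}=D_-D_+z_{j-1}=D_+D_-z_{j-1}-D_0z_{j-1}=-j(\lambda+j-1)\,z_{j-1}=j(m-2k-j+1)\,z_{j-1}.
$$
In particular, the span $W$ of the $z_j$ is stable under $D_+$, $D_0$ and $D_-$, so $W$ is a submodule.

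The key step, and really the only nontrivial one, is to show that $z_j\neq0$ for $0\le j\le m-2k$ whenever $k\le\lfloor m/2\rfloor$. We argue by contradiction. Let $j_0\ge1$ be the least index with $z_{j_0}=0$. Applying $D_-$ to $z_{j_0}=0$ gives $j_0(m-2k-j_0+1)z_{j_0-1}=0$, and since $z_{j_0-1}\neq0$, this forces $j_0=m-2k+1$.

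Next we show that $z_{m-2k+1}=0$. This is automatic for degree reasons only when $k=0$, so we use the standard finite-dimensionality argument instead. The vectors $z_j$ lie in the distinct graded pieces $A_{k+j}$, so the nonzero ones are linearly independent. Since $A_r=0$ for $r>m$, some $z_j$ vanishes. Let $j_0$ be the first vanishing index; by the above, $j_0=m-2k+1$. This gives $D_+^{m-2k}(z)\neq0$ and $D_+^{m-2k+1}(z)=0$.

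Consequently $W$ has basis $z_0,\dots,z_{m-2k}$ with weights $-(m-2k),\dots,m-2k$. After rescaling the basis vectors it matches the formulas for $V_{m-2k}$. Concretely, take $v_i=z_i\cdot\frac{(m-2k-i)!}{(m-2k)!}$; then $D_+v_i=(d-i)v_{i+1}$ with $d=m-2k$, and the formula for $D_-z_j$ gives $D_-v_i=i\,v_{i-1}$. Alternatively, irreducibility follows directly: every nonzero submodule of $W$ contains a $D_0$-eigenvector, hence some $z_j$, and applying $D_-$ repeatedly (with nonzero coefficients for $j\le d$) reaches $z_0$, which generates all of $W$.

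For $k>\lfloor m/2\rfloor$ we have $2k-m\ge1$. Suppose $0\ne z\in P_k$ and run the same argument. The first vanishing index $j_0\ge1$ must satisfy $j_0(m-2k-j_0+1)=0$, that is, $j_0=m-2k+1\le0$, which is a contradiction. But some $z_j$ must vanish by finite-dimensionality, as before. Hence $P_k=0$. The main point to handle carefully is this finite-dimensionality and first-vanishing-index argument; everything else is routine bookkeeping with the coefficients $j(m-2k-j+1)$.
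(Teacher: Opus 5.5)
Your proof is correct, but it takes a different route from the paper's.

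The paper argues abstractly. It invokes complete reducibility and notes that $z$ is a $D_-$-annihilated weight vector of weight $-(m-2k)$. It then appeals to the standard fact, recalled in the preliminaries, that such a lowest weight vector generates a copy of $V_{m-2k}$, and reads off $D_+^{m-2k}(z)\neq 0$ and $D_+^{m-2k+1}(z)=0$ from the structure of $V_{m-2k}$. For $k>\lfloor m/2\rfloor$ it uses the fact that lowest weights $-d$ of irreducible modules are nonpositive.

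You instead work directly from the commutation relations. You derive $D_-D_+^{j}z=j(m-2k-j+1)\,D_+^{j-1}z$, and then combine the grading ($A_r=0$ for $r>m$) with a first-vanishing-index argument to locate exactly where the chain stops. This treats the cases $k\le\lfloor m/2\rfloor$ and $k>\lfloor m/2\rfloor$ uniformly.

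What your approach buys is self-containment: it needs neither complete reducibility nor the classification of irreducible $\mathfrak{sl}_2$-modules. It actually proves the two facts the paper takes as given: that a lowest weight vector of weight $-d$ generates $V_d$, and that a $D_-$-annihilated vector cannot have positive weight. The paper's phrasing of the second fact ("can only be a lowest weight vector of some irreducible component") tacitly requires projecting onto components, which your argument avoids entirely. You also get the explicit normalization $v_i=\frac{(d-i)!}{d!}\,D_+^{i}z$ identifying the span with $V_{m-2k}$. The paper's proof is shorter only because it delegates exactly this computation to the cited classical theory.
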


\begin{proof}
Since $\mathcal A$ is a finite-dimensional $\mathfrak{sl}_2$-module, by the
complete reducibility theorem it is a direct sum of irreducible
$\mathfrak{sl}_2$-modules. Let $0\neq z\in P_k$. Then $z\in A_k$,
$D_-z=0$, and the weight of $z$ is
$$
-(m-2k).
$$
Thus, $z$ is a lowest weight vector in the submodule that it generates.
Therefore this submodule is isomorphic to the standard irreducible module
$V_{m-2k}$. It follows that the chain
$$
z,\ D_+z,\ D_+^2z,\dots,\ D_+^{m-2k}z,
$$
has length $m-2k+1$, and hence
$$
D_+^{m-2k}(z)\neq 0,
\qquad
D_+^{m-2k+1}(z)=0.
$$

If $k>m/2$, then the weight $2k-m$ is positive. But a nonzero vector
annihilated by the operator $D_-$ in a finite-dimensional
$\mathfrak{sl}_2$-module can only be a lowest weight vector of some
irreducible component, and the lowest weights of irreducible modules are of
the form $-d\le 0$. Hence, for $k>m/2$, there are no such vectors, that is,
$P_k=0$.
\end{proof}

The following theorem gives the dimension of the vector space of primitive
elements $P_k$.

\begin{theorem}\label{primitive_dimensions}
For each $k=0,1,\dots,m$ we have
$$
\dim P_k=
\begin{cases}
\displaystyle \binom{m}{k}-\binom{m}{k-1},
& 0\le k\le \left\lfloor \dfrac m2\right\rfloor,\\[8pt]
0,
& \left\lfloor \dfrac m2\right\rfloor<k\le m.
\end{cases}
$$
\end{theorem}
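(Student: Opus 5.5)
The plan is to combine Theorem~\ref{primitive_chain} with the standard $\mathfrak{sl}_2$ fact that $D_-$ maps each weight space onto the next lower one in the upper part of a finite-dimensional module. The case $k>\lfloor m/2\rfloor$ is already contained in Theorem~\ref{primitive_chain}, so only $0\le k\le\lfloor m/2\rfloor$ remains. For these $k$ it suffices, by rank--nullity, to show that
$$
D_-:A_k\to A_{k-1}
$$
is surjective. Then $\dim P_k=\dim A_k-\dim A_{k-1}=\binom mk-\binom m{k-1}$, with the convention $\binom m{-1}=0$ handling $k=0$.

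To prove surjectivity, I use Theorem~\ref{complete_reducibility_sl2} to write $\mathcal A$ as a direct sum of irreducible modules $V_d$. Since $D_0$ acts on $A_j$ by the scalar $2j-m$, each rank component $A_j$ is exactly the weight space of weight $2j-m$. Every irreducible summand is $D_0$-stable, so $A_j$ is the direct sum of the weight-$(2j-m)$ spaces of the summands. It is therefore enough to check surjectivity inside a single $V_d$.

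In $V_d$ with the basis of the preliminaries, $e_-v_i=i\,v_{i-1}$. Here $v_i$ has weight $2i-d$. The target weight is $2(k-1)-m\le -2$, since $k\le m/2$.

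Suppose $V_d$ has a nonzero vector of weight $2k-2-m$, namely $v_{i-1}$ with $2(i-1)-d=2k-2-m$. Then $v_i$ is also a weight vector of $V_d$, because weights are symmetric about $0$ and the target weight is strictly below $0$. Moreover $e_-v_i=i\,v_{i-1}$ with $i\ge1$, so $v_{i-1}$ lies in the image. This gives surjectivity of $D_-$ on each summand, hence on all of $A_k$.

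The only point needing care is confirming that weight $w+2$ exists in $V_d$ whenever $w\le -2$ is a weight of $V_d$. This holds because the weights of $V_d$ run from $-d$ to $d$ in steps of $2$ and $w<d$.

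Alternatively, one could argue by counting multiplicities. In that version the number of summands with lowest weight $2k-m$ equals $\dim A_k-\dim A_{k-1}$, as the dimensions of the weight spaces in the upper-triangular pattern show. That count is equivalent to the surjectivity argument, so I would present the surjectivity version.
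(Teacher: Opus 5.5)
Your proof is correct and uses the same ingredients as the paper's (complete reducibility plus the weight structure of $V_d$), in the reverse order: the paper counts the irreducible chains starting in degree $k$ to get $\dim P_k=\dim A_k-\dim A_{k-1}$ and then deduces surjectivity of $D_-$ as a corollary, whereas you prove surjectivity of $D_-:A_k\to A_{k-1}$ summand by summand and obtain the dimension by rank--nullity. Your version makes explicit why $A_k$ and $D_-$ split along the irreducible summands, which the paper's counting leaves implicit; the one slip is your opening phrase ``upper part'', since the surjectivity you actually use holds in the lower half (target weights $\le -2$), and that is the range your argument correctly works in.
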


\begin{proof}
Since $\mathcal A$ is a finite-dimensional $\mathfrak{sl}_2$-module over a
field of characteristic zero, it decomposes into a direct sum of irreducible
$\mathfrak{sl}_2$-modules.

In every irreducible module generated by a primitive element of degree $j$,
there is exactly one vector in each degree
$$
j,\ j+1,\dots,\ m-j,
$$
and there are no vectors in other degrees. Moreover, each such chain has a
unique initial vector annihilated by the operator $D_-$, while the chains that
started in smaller degrees have already been counted in the preceding
subspaces. Hence, if $k\le \lfloor m/2\rfloor$, the difference
$$
\dim A_k-\dim A_{k-1},
$$
counts exactly the number of new irreducible chains that start in degree
$k$. This number is equal to $\dim P_k$, because each such chain has exactly
one primitive vector in its initial degree.
Therefore
$$
\dim P_k
=
\dim A_k-\dim A_{k-1}
=
\binom{m}{k}-\binom{m}{k-1}.
$$
For $k>\lfloor m/2\rfloor$, the equality $P_k=0$ has already been established
in Theorem~\ref{primitive_chain}.
\end{proof}

As a consequence, for $1\le k\le \lfloor m/2\rfloor$ the operator
$
D_-
$
is surjective. Indeed,
$$
\operatorname{rank} D_-|_{A_k}
=
\dim A_k-\dim P_k
=
\binom{m}{k}
-
\left(\binom{m}{k}-\binom{m}{k-1}\right)
=
\binom{m}{k-1}
=
\dim A_{k-1}.
$$

Thus, for $k\le \lfloor m/2\rfloor$, the elements of the space $P_k$ are
precisely those elements in the space $A_k$ which are not obtained by the
action of the operator $D_+$ on elements of $A_{k-1}$.
It follows from the irreducible $\mathfrak{sl}_2$-decomposition that the
operator $D_+$ is injective on subspaces below the middle level. Therefore,
for $k\le \lfloor m/2\rfloor$, we have the direct sum
$$
A_k=P_k\oplus D_+(A_{k-1}).
$$

Iterating this decomposition, we obtain a decomposition of $A_k$ through
primitive spaces of smaller degrees:
\begin{equation} \label{P_to_A}
A_k
=
\bigoplus_{j=0}^{\min(k,m-k)}
D_+^{\,k-j}P_j.
\end{equation}

In other words, every element of $A_k$ is a sum of contributions from those
$\mathfrak{sl}_2$-chains which start in some primitive space $P_j$ and reach
$A_k$. The condition
$j\le \min(k,m-k)$ precisely means that the chain generated by $P_j$ contains
a component of degree $k$.

\medskip

Note that the operators $D_-$ and $D_+$ are not derivations of the graph
algebra. For the operator $D_-$ this is seen from the following formula:
$$
D_-(\boldsymbol{x}_S\boldsymbol{x}_T)
=
D_-(\boldsymbol{x}_S)\boldsymbol{x}_T
+
\boldsymbol{x}_S D_-(\boldsymbol{x}_T)
-
2|S\cap T|\,\boldsymbol{x}_{S\cup T}
+
\sum_{e\in S\cap T}
\boldsymbol{x}_{(S\cup T)\setminus\{e\}}.
$$
However, if $S\cap T=\varnothing$, then
\begin{equation}\label{Leibniz}
D_-(\boldsymbol{x}_S\boldsymbol{x}_T)=D_-(\boldsymbol{x}_S)\boldsymbol{x}_T+\boldsymbol{x}_SD_-(\boldsymbol{x}_T).
\end{equation}
Thus, the operator $D_-$ behaves as a derivation only on products of monomials
with disjoint supports.

\medskip

Let us describe all primitive elements for $n=4$.

\begin{example}
{\rm
By Theorem~\ref{primitive_dimensions} we have
$$
\dim P_0=1,\qquad
\dim P_1=5,\qquad
\dim P_2=9,\qquad
\dim P_3=5,
$$
and
$$
P_k=0,\qquad k>3.
$$
Let us describe these vector spaces of primitive elements explicitly.

\medskip

\noindent\textbf{Degree $0$.}
Since $A_0=\langle 1\rangle$ and $D_-(1)=0$, we have
$
P_0=\langle 1\rangle.
$
The element $1$ has weight $-6$ and generates an irreducible
$\mathfrak{sl}_2$-submodule isomorphic to $V_6$.

\medskip

\noindent\textbf{Degree $1$.}
On basis monomials of degree $1$ we have
$
D_-(x_i)=1,\qquad i=1,\dots,6,
$
and therefore
$$
P_1=\left\{\sum_{i=1}^6 a_i x_i:\ \sum_{i=1}^6 a_i=0\right\}.
$$
A convenient basis of this space is
$$
P_1=
\left\langle
x_1-x_2,\ x_1-x_3,\ x_1-x_4,\ x_1-x_5,\ x_1-x_6
\right\rangle.
$$
Every nonzero element of $P_1$ generates an irreducible module of type $V_4$.

\medskip

\noindent\textbf{Degree $2$.}
The space $P_2=\ker(D_-:A_2\to A_1)$ has dimension
$$
\dim P_2=\binom62-\binom61=9.
$$
Its basis can be chosen in the form of products of linear differences:
$$
\begin{aligned}
f_1&=(x_1-x_2)(x_3-x_4),&
f_2&=(x_1-x_2)(x_3-x_5),&
f_3&=(x_1-x_2)(x_3-x_6),\\
f_4&=(x_1-x_3)(x_2-x_4),&
f_5&=(x_1-x_3)(x_2-x_5),&
f_6&=(x_1-x_3)(x_2-x_6),\\
f_7&=(x_1-x_4)(x_2-x_5),&
f_8&=(x_1-x_4)(x_2-x_6),&
f_9&=(x_1-x_5)(x_2-x_6).
\end{aligned}
$$
Their primitiveness follows immediately from \eqref{Leibniz}, and their
linear independence follows from the fact that each $f_i$ contains a monomial
which does not occur in the expansion of any other element in the list.
Every nonzero element of $P_2$ generates an irreducible
$\mathfrak{sl}_2$-module of type $V_2$.

\medskip

\noindent\textbf{Degree $3$.}
Primitive elements of degree $3$ have weight $0$ and generate trivial
$\mathfrak{sl}_2$-modules of type $V_0$. The dimension of this space is
$$
\dim P_3=\binom63-\binom62=5.
$$
A convenient basis is given by five products of three linear differences:
$$
\begin{aligned}
g_1&=(x_1-x_2)(x_3-x_4)(x_5-x_6),\\
g_2&=(x_1-x_2)(x_3-x_5)(x_4-x_6),\\
g_3&=(x_1-x_3)(x_2-x_4)(x_5-x_6),\\
g_4&=(x_1-x_3)(x_2-x_5)(x_4-x_6),\\
g_5&=(x_1-x_4)(x_2-x_5)(x_3-x_6).
\end{aligned}
$$
Since this is a trivial $\mathfrak{sl}_2$-module, for every $g\in P_3$ we
also have
$
D_+(g)=0.
$

Thus, for $n=4$ the primitive spaces have the form
$$
P_0\cong \mathbb C,\qquad
\dim P_1=5,\qquad
\dim P_2=9,\qquad
\dim P_3=5,
$$
and the corresponding decomposition of the algebra as an
$\mathfrak{sl}_2$-module is
$$
\mathcal A
\cong
V_6\oplus 5V_4\oplus 9V_2\oplus 5V_0.
$$
Checking the dimension gives
$$
\dim\mathcal A
=
1\cdot 7+5\cdot 5+9\cdot 3+5\cdot 1
=
2^6.
$$
}
$\triangle$ \end{example}


In the general case, the following statement holds:

\begin{theorem}\label{irred_decomp_A}
The algebra $\mathcal A$, as an $\mathfrak{sl}_2$-module, decomposes into a
direct sum of irreducible modules
\begin{equation}
\mathcal A
\cong
\bigoplus_{k=0}^{\lfloor m/2\rfloor}
\mu_k\,V_{m-2k},
\end{equation}
where
\begin{equation}
\mu_k
=
\dim P_k
=
\binom{m}{k}-\binom{m}{k-1},
\qquad
0\le k\le \left\lfloor\frac m2\right\rfloor, \quad \binom{m}{-1}=0.
\end{equation}
\end{theorem}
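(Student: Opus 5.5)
The plan is to combine complete reducibility (Theorem~\ref{complete_reducibility_sl2}) with the lowest-weight description of the multiplicities, and then substitute the dimensions of the primitive spaces. By Theorem~\ref{complete_reducibility_sl2}, $\mathcal A\cong\bigoplus_{d\ge0} m_d V_d$ with uniquely determined multiplicities $m_d$. The first step is to identify the weight spaces of $\mathcal A$. Since $D_0$ acts on $A_k$ by the scalar $-(m-2k)$, and these scalars are pairwise distinct for distinct $k$, the weight subspace of weight $-(m-2k)$ is exactly $A_k$. In particular, all weights have the same parity as $m$, so only modules $V_d$ with $d\equiv m \pmod 2$ and $d\le m$ can occur.

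Next, determine the multiplicities using lowest weight vectors. In each summand $V_d$, the kernel of $e_-$ is one-dimensional and spanned by $v_0$, which has weight $-d$. Hence the kernel of $D_-$ on the weight space of weight $-d$ has dimension $m_d$. Taking $d=m-2k$, this weight space is $A_k$, so
$$
m_{m-2k}=\dim\ker(D_-|_{A_k})=\dim P_k .
$$
This is precisely the recipe recalled at the end of the $\mathfrak{sl}_2$ preliminaries. For $k>\lfloor m/2\rfloor$, Theorem~\ref{primitive_chain} gives $P_k=0$, which is consistent with $d=m-2k<0$ not occurring. Therefore only $0\le k\le\lfloor m/2\rfloor$ contribute, and $\mathcal A\cong\bigoplus_k (\dim P_k)\,V_{m-2k}$.

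Finally, Theorem~\ref{primitive_dimensions} supplies $\dim P_k=\binom mk-\binom m{k-1}$, with the convention $\binom m{-1}=0$ for $k=0$. This yields the stated $\mu_k$. As a sanity check, $\sum_k \mu_k(m-2k+1)=2^m$ telescopes correctly; this is not needed for the proof, but it matches the $n=4$ example.

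The only point requiring care is the justification that $\dim(\ker D_-\cap A_k)$ equals the multiplicity of $V_{m-2k}$. Concretely, one must check that $\ker e_-$ of a direct sum is the direct sum of the kernels, and that in $V_d$ one has $\ker e_-=\langle v_0\rangle$, which follows from $e_-v_i=i\,v_{i-1}$ with $i\neq0$ invertible in characteristic zero. An alternative that avoids any circularity with the counting argument in Theorem~\ref{primitive_dimensions} is to compute $m_{m-2k}=\dim A_k-\dim A_{k-1}$ directly from the weight multiplicities. Each $V_d$ with $d\ge m-2k$ contributes exactly one dimension to the weight $-(m-2k)$. Hence
$$
\dim A_k=\sum_{j\le k} m_{m-2j},
$$
and subtracting the analogous expression for $\dim A_{k-1}$ gives the result.
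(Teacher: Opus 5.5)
Your proposal is correct and follows the paper's route. The paper's one-line proof combines complete reducibility with Theorem~\ref{primitive_dimensions}, implicitly relying on the lowest-weight recipe from the preliminaries: the multiplicity of $V_d$ equals the dimension of $\ker e_-$ in weight $-d$. You make that link explicit by identifying the weight-$(2k-m)$ space with $A_k$, and your fallback count $\dim A_k=\sum_{j\le k}m_{m-2j}$ is essentially the argument the paper itself gives for Theorem~\ref{primitive_dimensions}, so it adds rigor rather than a different strategy.
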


The proof follows immediately from Theorem~\ref{complete_reducibility_sl2}
and Theorem~\ref{primitive_dimensions}.

Thus, we have obtained the complete decomposition of the graph algebra
$\mathcal A$ into irreducible $\mathfrak{sl}_2$-modules.
As a by-product, passing to dimensions, we obtain the  binomial
identity
$$
\sum_{k=0}^{\lfloor m/2\rfloor}
\left(\binom{m}{k}-\binom{m}{k-1}\right)(m-2k+1)=2^m.
$$

\subsection{Involution}

Define the linear operator
\begin{equation}\label{involution}
\omega:\mathcal A\to\mathcal A,
\qquad
\omega(\boldsymbol{x}_S)=\boldsymbol{x}_{\overline S}.
\end{equation}
It is easy to check that the operator $\omega$ is a linear involution:
$
\omega^2=\mathrm{Id}_{\mathcal A},
$
and moreover
$
\omega(A_k)=A_{m-k}.
$

The following statement holds.
\begin{theorem}
The operators $D_+$ and $D_-$ are related by
$$
D_+=\omega D_-\omega.
$$
\end{theorem}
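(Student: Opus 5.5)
Both sides of the identity are linear operators on $\mathcal A$, so it suffices to compare them on the basis monomials $\boldsymbol{x}_S$, $S\subseteq[m]$. The approach is a direct computation: apply $\omega$, then $D_-$, then $\omega$ again, and check that the result agrees with the defining formula for $D_+$.

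Fix $S\subseteq[m]$. By \eqref{involution}, $\omega(\boldsymbol{x}_S)=\boldsymbol{x}_{\overline S}$. Applying the definition of $D_-$ to the monomial indexed by $\overline S$ gives
$$
D_-\omega(\boldsymbol{x}_S)=\sum_{e\in\overline S}\boldsymbol{x}_{\overline S\setminus\{e\}}.
$$
Next I use the elementary set identity $[m]\setminus(\overline S\setminus\{e\})=S\cup\{e\}$, valid for every $e\in\overline S$. Applying $\omega$ termwise then yields
$$
\omega D_-\omega(\boldsymbol{x}_S)=\sum_{e\in\overline S}\boldsymbol{x}_{S\cup\{e\}}=D_+(\boldsymbol{x}_S).
$$

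The only point needing separate attention is the boundary convention. For $S=[m]$ we have $\overline S=\varnothing$, and $D_-(\boldsymbol{x}_\varnothing)=0$. This matches the convention $D_+(\boldsymbol{x}_{[m]})=0$, which is also what the general formula gives, since the sum over $e\in\overline S$ is empty.

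I do not expect any real obstacle. The argument is a bijection between the terms of the two sums, indexed by the same set $\overline S$, so no coefficients or multiplicities need to be tracked. Finally, since $\omega^2=\mathrm{Id}$, conjugating both sides by $\omega$ gives the companion identity $D_-=\omega D_+\omega$ for free.
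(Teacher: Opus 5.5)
Your proposal is correct and follows essentially the same route as the paper's proof: check on basis monomials, compute $D_-(\boldsymbol{x}_{\overline S})=\sum_{e\in\overline S}\boldsymbol{x}_{\overline S\setminus\{e\}}$, and use $[m]\setminus(\overline S\setminus\{e\})=S\cup\{e\}$ to recover $D_+(\boldsymbol{x}_S)$. Your remarks on the boundary case $S=[m]$ and on the companion identity $D_-=\omega D_+\omega$ (from $\omega^2=\mathrm{Id}$) are small correct additions that the paper leaves implicit.
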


\begin{proof}
It is enough to check the statement on basis monomials $\boldsymbol{x}_S$.
We have
$$
D_-(\omega(\boldsymbol{x}_S))=D_-(\boldsymbol{x}_{\overline S})
=
\sum_{e\in \overline S}\boldsymbol{x}_{\overline S\setminus\{e\}}.
$$
Now apply $\omega$ to each summand:
$$
\omega\!\left(\boldsymbol{x}_{\overline S\setminus\{e\}}\right)
=
\boldsymbol{x}_{[m]\setminus(\overline S\setminus\{e\})}
=
\boldsymbol{x}_{S\cup\{e\}}.
$$
Therefore
$$
\omega D_-\omega(\boldsymbol{x}_S)
=
\sum_{e\in \overline S}\boldsymbol{x}_{S\cup\{e\}}
=
D_+(\boldsymbol{x}_S).
$$
Since both operators are linear, we obtain
$
D_+=\omega D_-\omega
$
on the whole of $\mathcal A$.
\end{proof}

Let
$$
\mathcal A^{\mathfrak{sl}_2}=\ker D_-\cap \ker D_+,
$$
be the vector space of $\mathfrak{sl}_2$-invariants, that is, the elements of
$\mathcal A$ which generate trivial $\mathfrak{sl}_2$-modules.

\begin{theorem}
The following statements hold.
\begin{enumerate}
\item[\textnormal{(i)}] The graph-complement involution preserves the space of
$\mathfrak{sl}_2$-invariants:
$$
\omega\bigl(\mathcal A^{\mathfrak{sl}_2}\bigr)
=
\mathcal A^{\mathfrak{sl}_2}.
$$

\item[\textnormal{(ii)}]
The following equality holds:
$$
\mathcal A^{\mathfrak{sl}_2}=
\begin{cases}
P_{m/2}, \quad \text{if $m$ is even},\\
0, \quad \text{if $m$ is odd}.
\end{cases}
$$
\end{enumerate}
\end{theorem}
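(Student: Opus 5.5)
For (i) I will use the relation $D_+=\omega D_-\omega$ from the previous theorem. Since $\omega^2=\mathrm{Id}$, it also gives $D_-=\omega D_+\omega$. Let $f\in\mathcal A^{\mathfrak{sl}_2}$. Then
$$
D_-(\omega f)=\omega D_+\omega(\omega f)=\omega D_+f=0,
\qquad
D_+(\omega f)=\omega D_-f=0.
$$
Hence $\omega(\mathcal A^{\mathfrak{sl}_2})\subseteq\mathcal A^{\mathfrak{sl}_2}$. Applying $\omega$ once more gives the reverse inclusion, so the two spaces are equal.

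For (ii) the first step is to show that $\mathcal A^{\mathfrak{sl}_2}$ is a rank subspace. Since $D_\pm$ map $A_k$ to $A_{k\pm1}$, write $f=\sum_k f_k$ with $f_k\in A_k$. The equation $D_-f=0$ splits by degree into $D_-f_k=0$ for every $k$, and likewise for $D_+$. Therefore
$$
\mathcal A^{\mathfrak{sl}_2}=\bigoplus_k\bigl(P_k\cap\ker D_+|_{A_k}\bigr).
$$
Now fix $k$ and take $0\neq z\in P_k$. By Theorem~\ref{primitive_chain}, $P_k=0$ when $k>\lfloor m/2\rfloor$. When $k\le\lfloor m/2\rfloor$, the same theorem says $D_+^{m-2k}z\neq0$. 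If $m-2k\ge1$, this gives $D_+z\neq0$, so $z$ is not an invariant. Consequently a nonzero component $f_k$ can occur only when $m=2k$, that is, only when $m$ is even and $k=m/2$. This already gives $\mathcal A^{\mathfrak{sl}_2}=0$ for odd $m$.

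For even $m$ it remains to prove $P_{m/2}\subseteq\ker D_+$. For $z\in P_{m/2}$, Theorem~\ref{primitive_chain} gives $D_+^{m-2k+1}z=D_+z=0$. As a direct check, $z$ has weight $0$ and is a lowest weight vector, so it spans a copy of $V_0$, on which $D_+$ acts by zero. Together with the previous paragraph this gives $\mathcal A^{\mathfrak{sl}_2}=P_{m/2}$.

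I do not expect a genuine obstacle. The only point needing care is the reduction to homogeneous components: Theorem~\ref{primitive_chain} applies to elements of a single $P_k$, so the degree splitting must be done first, and the remaining steps follow from it.
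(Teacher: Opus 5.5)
Your proof is correct. Part (i) is the same as in the paper, and you also make the reverse inclusion explicit via $\omega^2=\mathrm{Id}$, which the paper leaves unstated.

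For (ii) your route differs from the paper's. The paper uses a weight argument: an invariant $f$ is annihilated by $D_0=[D_+,D_-]$, and $D_0$ acts on $A_k$ by the scalar $2k-m$. So $f$ must lie in $A_{m/2}$, which is impossible for odd $m$, and hence $f\in A_{m/2}\cap\ker D_-=P_{m/2}$. You instead split $f$ into homogeneous components and rule out $k<m/2$ using the non-vanishing $D_+^{m-2k}z\neq 0$ from Theorem~\ref{primitive_chain}.

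The paper's forward inclusion is more elementary. It needs only the commutation relation and the diagonal form of $D_0$. Yours rests on the full chain theorem, and so on the $\mathfrak{sl}_2$ structure theory behind it.

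In exchange, your argument explicitly proves the reverse inclusion $P_{m/2}\subseteq\ker D_+$. The paper's proof passes over this step, and it does need some such input. For instance, for $z\in P_{m/2}$ one has $D_-D_+z=D_+D_-z-D_0z=0$, so $D_+z\in P_{m/2+1}=0$.
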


\begin{proof}
If $f\in\mathcal A^{\mathfrak{sl}_2}$, then by the properties of the
involution we have
$$
D_+(\omega f)=\omega D_-f=0,
\qquad
D_-(\omega f)=\omega D_+f=0.
$$
Hence
$$
\omega f\in\mathcal A^{\mathfrak{sl}_2},
$$
which proves part \textnormal{(i)}.
To prove part \textnormal{(ii)}, note that $\mathfrak{sl}_2$-invariants have
weight zero. This is possible only under the condition
$
m-2j=0.
$
If $m$ is odd, such a $j$ does not exist, and therefore
$\mathcal A^\mathfrak{sl}_2=0$.
If $m$ is even, then all invariants lie in $A_{m/2}$, and therefore
$\mathcal A^{\mathfrak{sl}_2}=P_{m/2}$.
\end{proof}

\medskip

It is worth noting that the constructed $\mathfrak{sl}_2$-structure is a
graph-theoretic realization of the standard mechanism of raising and lowering
operators on Boolean and related graded posets. Unlike general rank-theoretic
applications of this mechanism, in our case the operators $D_+$ and $D_-$ have
a direct graph-theoretic interpretation as adding and deleting one edge.
In the next section we show that this structure is compatible with the action
of the pair group $\Gamma$ and therefore passes to the space of graph
invariants.


\section{The graph algebra $\mathcal A$ as a 
$\Gamma$-module}


In the previous section, the graph algebra $\mathcal A$ was considered as an
$\mathfrak{sl}_2$-module. We now consider the structure on $\mathcal A$
defined by the action of the pair group
$
\Gamma.
$
Unlike the $\mathfrak{sl}_2$-structure, where the decomposition of the algebra
$\mathcal A$ has a simple explicit form, the decomposition with respect to the
group
$
\Gamma
$
is much more complicated. In order to describe it, we first consider the
action of the larger symmetric group
$
S_m
$
on the set of variables $x_1,\dots,x_m$, and then restrict the obtained
$S_m$-modules to the subgroup $\Gamma\subseteq S_m$.

\subsection{Decomposition of $\mathcal A$ into irreducible 
$\Gamma$-modules}

Recall that the action of $\Gamma$ on the edge coordinate functions is given
by a permutation of indices and is extended multiplicatively and linearly to
the whole algebra $\mathcal A$.
Since the action of $\Gamma$ preserves the degree of a monomial, each rank
component
$
A_k
$
is a $\Gamma$-invariant subspace, and therefore it is natural to pose the
problem of decomposing these spaces into irreducible components.

For each $0\le k\le m$, the vector space $A_k$ is the permutation
$S_m$-module on the set of all $k$-element subsets of $[m]$.
If $0\le k\le \lfloor m/2\rfloor$, then this module can be identified with
the standard permutation module $M^{(m-k,k)}$, realized as the vector space
with basis given by $(m-k,k)$-tabloids, see~\cite{Sagan}.
For $k>\lfloor m/2\rfloor$, the corresponding description is obtained from
the previous one by taking complements of subsets, that is, via the natural
isomorphism $A_k\cong A_{m-k}$.

Recall that a tabloid may be viewed as a Young tableau in which the
requirement that the entries be ordered within rows is removed; a tabloid is
determined only by the sets of entries in each row, independently of their
order. If
$0\le k\le \lfloor m/2\rfloor$, then $(m-k,k)$ is a partition of $m$, and
such tabloids canonically correspond to $k$-element subsets of $[m]$: the
second row of the tabloid gives the subset $S$, while the first row gives its
complement $[m]\setminus S$. Thus, the identification
$$
\boldsymbol{x}_S
\longleftrightarrow
\left\{\begin{array}{c}
[m]\setminus S\\
S
\end{array}\right\},
$$
defines an isomorphism of $S_m$-modules
$$
A_k\cong M^{(m-k,k)},
\qquad 0\le k\le \left\lfloor \frac m2\right\rfloor.
$$

Irreducible $S_m$-modules over a field of characteristic zero are parametrized
by partitions of the number~$m$. The module corresponding to a partition
$\lambda \vdash m$ is called a \emph{Specht module} and is denoted by
$S^\lambda$.
In general, the decomposition of the permutation module $M^\lambda$ into
irreducible submodules is described by \emph{Young's rule}
\cite[Theorem~2.11.2]{Sagan}, which in our case simplifies to
$$
M^{(m-k,k)}
\cong
\bigoplus_{j=0}^{\min(k,m-k)}
S^{(m-j,j)}.
$$

The dimension of the Specht module $S^\lambda$ is equal to the number
$f^\lambda$ of standard Young tableaux of shape $\lambda$ and is computed by
the hook-length formula, see for example \cite[Section~4.3]{Sagan}.
For the two-row partition $(m-k,k)$, the hook-length formula, see
Fulton~\cite[Exercise~9]{Fulton1997}, after simplification gives
\begin{equation} \label{dim_S}
\dim S^{(m-k,k)}=f^{(m-k,k)}
=
\frac{m!(m-2k+1)}{(m-k+1)!k!}
=
\binom{m}{k}-\binom{m}{k-1}.
\end{equation}
As we see, the dimension of the Specht module $S^{(m-k,k)}$ turns out to be
the same as the dimension of the space of primitive elements $P_k$. We shall
return to this fact in the next subsection.

Each Specht module $S^{(m-k,k)}$ admits an explicit polynomial realization as
a subspace of $A_k$.
We briefly recall this construction; detailed definitions of Young diagrams
and Young tableaux can be found in~\cite{Sagan, FH, Fulton1997}.
A \emph{standard Young tableau} of shape~$\lambda$ is a filling of a Young
diagram by the numbers from $[m]$ such that the entries strictly increase in
each row from left to right and in each column from top to bottom.
To each such tableau~$T$ of shape $\lambda=(m-j,j)$ one associates the
polynomial $\Delta_T$, the product of Vandermonde differences over all
columns of the tableau.
The polynomials $\Delta_T$, where $T$ runs through all standard Young tableaux
of shape~$\lambda$, form a basis of the Specht module $S^\lambda$.

\begin{example}{\rm

Consider the partition
$
\lambda=(3,2).
$
In this case the standard tableaux have the form
$$
\young(123,45),
\qquad
\young(124,35),
\qquad
\young(125,34),
\qquad
\young(134,25),
\qquad
\young(135,24).
$$
and generate the following basis elements of the Specht module $S^{(3,2)}$:
\begin{align*}
&(x_1-x_4)(x_2-x_5), (x_1-x_3)(x_2-x_5),(x_1-x_3)(x_2-x_4),\\
&(x_1-x_2)(x_3-x_5), (x_1-x_2)(x_3-x_4).
\end{align*}
}
$\triangle$ \end{example}

\medskip

In what follows, these polynomial realizations of Specht modules will play an
important role in the description of $\mathfrak{sl}_2$-invariants and their
connection with orbit sums of graphs.

If $W$ is an $S_m$-module, then by
$
\operatorname{Res}^{S_m}_{\Gamma} W
$
we shall denote the same vector space, but with the action restricted from
$S_m$ to the subgroup $\Gamma\subseteq S_m$.
After restricting the action from $S_m$ to the subgroup $\Gamma$, we obtain
\begin{equation}\label{Ak_Gamma_restriction}
A_k
\cong_\Gamma
\operatorname{Res}^{S_m}_{\Gamma} M^{(m-k,k)}
\cong
\bigoplus_{j=0}^{\min(k,m-k)}
\operatorname{Res}^{S_m}_{\Gamma} S^{(m-j,j)}.
\end{equation}

Therefore, the problem of decomposing $A_k$ into irreducible $\Gamma$-modules
is reduced to the problem of decomposing the restrictions
$
\operatorname{Res}^{S_m}_{\Gamma} S^{(m-j,j)}
$
into irreducible $\Gamma$-modules, which we shall describe completely below.

Recall that for $n\ge 3$ the homomorphism
$
\varphi:S_n\longrightarrow \Gamma
$
is an isomorphism, see Subsection~\ref{ss2.1}. Therefore the theory of
finite-dimensional representations of the group $\Gamma$ over a field of
characteristic zero is identified with the representation theory of the
symmetric group $S_n$.

As in the general case, irreducible $S_n$-modules over a field of
characteristic zero are again parametrized by partitions
$
\lambda\vdash n
$
and are realized by the corresponding Specht modules. For each such partition
of the number $n$, denote by $U_\lambda$ the corresponding $\Gamma$-module
obtained by transporting the Specht module $S^\lambda$ through the
isomorphism $\varphi$. That is, as a vector space
$
U_\lambda=S^\lambda,
$
but the action of an element $\gamma\in\Gamma$ is given by
$$
\gamma\cdot u
=
\varphi^{-1}(\gamma)\cdot u,
\qquad
u\in S^\lambda.
$$

\begin{theorem}
Every irreducible $\Gamma$-module is isomorphic to exactly one module
$U_\lambda$, where $\lambda\vdash n$.
\end{theorem}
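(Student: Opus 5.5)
The statement is a transport-of-structure consequence of the classification of irreducible $S_n$-modules together with Theorem~\ref{phi_iso}. I will assume $n\ge 3$, which is the setting in which the paper identifies $\Gamma$ with $S_n$. The plan is to show that the functor $W\mapsto W^\varphi$ sends irreducibles to irreducibles and isomorphism classes to isomorphism classes, bijectively. Here $W^\varphi$ means the same vector space $W$ with $\sigma\in S_n$ acting as $\varphi(\sigma)$. Pulling back along $\varphi^{-1}$ is the inverse operation, and $U_\lambda$ is by definition the pullback of $S^\lambda$ along $\varphi^{-1}$.

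The argument has four steps.

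\begin{enumerate}
\item First I check that $\varphi^{-1}$ makes sense. By Theorem~\ref{phi_iso}, $\varphi:S_n\to\Gamma$ is a group isomorphism, so $\varphi^{-1}:\Gamma\to S_n$ is a homomorphism. Hence $\gamma\cdot u=\varphi^{-1}(\gamma)u$ really is a representation of $\Gamma$ on $S^\lambda$.

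\item Next, subspace invariance is preserved. A subspace of a vector space $W$ is $\Gamma$-stable for the pulled-back action if and only if it is $S_n$-stable for the original action, because the two groups of operators on $W$ coincide as sets. So $U_\lambda$ is irreducible, since $S^\lambda$ is irreducible over $S_n$ in characteristic zero.

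\item Existence: let $W$ be an arbitrary irreducible $\Gamma$-module. Then $W^\varphi$ is an irreducible $S_n$-module by the same invariance argument. By the classical classification of irreducible representations of $S_n$ over $\mathbb C$ (\cite{Sagan}), there is some $\lambda\vdash n$ and an $S_n$-isomorphism $T:W^\varphi\to S^\lambda$. Since $T$ commutes with each $\sigma\in S_n$, it commutes with each $\gamma=\varphi(\sigma)$. Thus $T:W\to U_\lambda$ is a $\Gamma$-isomorphism.

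\item Uniqueness: suppose $U_\lambda\cong U_\mu$ as $\Gamma$-modules. The same commuting-intertwiner argument, run backwards, gives $S^\lambda\cong S^\mu$ as $S_n$-modules. The Specht modules for distinct partitions are pairwise non-isomorphic, so $\lambda=\mu$.
\end{enumerate}

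There is no real obstacle here. The only point needing care is that the intertwiners are literally the same linear maps for both actions. Without $n\ge 3$, $\varphi$ is not injective: for $n=2$ the group $\Gamma$ is trivial, and only $U_{(2)}$ would be well defined. So the hypothesis of Theorem~\ref{phi_iso} must be invoked explicitly.
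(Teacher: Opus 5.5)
Your proposal is correct and is the same argument as the paper's, which justifies the statement in one line as the transfer of the classification of irreducible $S_n$-modules through the isomorphism $\varphi\colon S_n\to\Gamma$ of Theorem~\ref{phi_iso}. Your steps (equal operator sets give preservation of irreducibility, and intertwiners are literally the same linear maps for both actions) make that transfer explicit, and your insistence on $n\ge 3$ matches the paper's standing assumption, with one small correction: $\varphi$ fails to be injective only at $n=2$, since for $n\le 1$ both groups are trivial.
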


This statement is the direct transfer of the standard classification of
irreducible $S_n$-modules through the isomorphism $\varphi:S_n\cong\Gamma$.

\medskip

Irreducible $\Gamma$-modules admit an explicit polynomial realization
analogous to the realization of Specht modules given above.

\begin{example}
  For $n = 4$ we have five irreducible $\Gamma$-modules of dimensions
  $1,3,2,3,1$:
  $$
    U_{(4)},\quad
    U_{(3,1)},\quad
    U_{(2,2)},\quad
    U_{(2,1,1)},\quad
    U_{(1^4)}.
  $$
  Since these modules are associated with permutations of \emph{vertices},
  the natural coordinates are vertex variables, which we denote by
  $y_1,y_2,y_3,y_4$.
  In the polynomial realization through these variables one may take the
  following bases:
  $$
    U_{(4)}
    =
    \langle\,1\,\rangle,
  $$
  $$
    U_{(3,1)}
    =
    \langle\,
      y_1-y_2,\;
      y_1-y_3,\;
      y_1-y_4
    \,\rangle,
  $$
  $$
    U_{(2,2)}
    =
    \bigl\langle\,
      (y_1-y_3)(y_2-y_4),\;
      (y_1-y_2)(y_3-y_4)
    \,\bigr\rangle,
  $$
  $$
    U_{(2,1,1)}
    =
    \bigl\langle\,
      (y_1-y_3)(y_1-y_4)(y_3-y_4),\;
      (y_1-y_2)(y_1-y_4)(y_2-y_4),\;
      (y_1-y_2)(y_1-y_3)(y_2-y_3)
    \,\bigr\rangle,
  $$
  $$
    U_{(1^4)}
    =
    \left\langle\,
      \prod_{1\le i<j\le 4}(y_i-y_j)
    \,\right\rangle.
  $$
  Here $U_{(4)}$ is the trivial module, while $U_{(1^4)}$ is the sign module.

  $\triangle$
\end{example}

The polynomial realization above describes the type of an irreducible
$\Gamma$-module through vertex variables and is abstract: it does not yet
automatically define a concrete subspace in the graph algebra~$\mathcal{A}$,
which has edge coordinates
$x_1, x_2 \ldots, x_m$.

We can now decompose $A_k$ into irreducible $\Gamma$-modules.

Each summand $\operatorname{Res}^{S_m}_{\Gamma} S^{(m-j,j)}$ in
\eqref{Ak_Gamma_restriction} decomposes into irreducible $\Gamma$-modules:
$$
  \operatorname{Res}^{S_m}_{\Gamma} S^{(m-j,j)}
  \;\cong\;
  \bigoplus_{\lambda \,\vdash\, n}
  m_{\lambda,j}\, U_\lambda,
$$
where the multiplicities $m_{\lambda,j}$ are computed by the scalar product
of characters \cite[Chapter~2]{Sagan}:
$$
  m_{\lambda,j}
  \;=\;
  \bigl\langle
    \operatorname{Res}^{S_m}_{\Gamma}\chi^{(m-j,j)},\;
    \chi^\lambda
  \bigr\rangle_{\!\Gamma}
  \;=\;
  \frac{1}{n!}
  \sum_{\sigma \in S_n}
  \chi^{(m-j,j)}\!\bigl(\varphi(\sigma)\bigr)\cdot
  \chi^\lambda(\sigma).
$$

Substituting this into~\eqref{Ak_Gamma_restriction}, we obtain the full
decomposition of $A_k$ into irreducible $\Gamma$-modules:
\begin{equation}
  A_k
  \;\cong\;
  \bigoplus_{j=0}^{\min(k,\,m-k)}
  \bigoplus_{\lambda \,\vdash\, n}
  m_{\lambda,j}\, U_\lambda.
\end{equation}

\subsection{Primitive spaces and two-row Specht modules}

In this subsection we record an important connection between the primitive
spaces of the operator $D_-$ and two-row Specht modules. It will be used in
the next section when interpreting the Schur--Weyl decomposition.

In what follows, we shall need to consider not only the whole algebra
$\mathcal A$, but also its $\Gamma$-invariant part. Therefore we record that
the operators defining the $\mathfrak{sl}_2$-structure are compatible with
renumbering of vertices, that is, they are $\Gamma$-equivariant.

\begin{theorem}
The operators $D_+$, $D_-$, and $D_0$ commute with the action of the group
$\Gamma$. In particular, the spaces $A_k$ and $P_k$ are
$\Gamma$-invariant subspaces.
\end{theorem}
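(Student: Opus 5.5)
The plan is to prove the stronger statement that $D_+$, $D_-$ and $D_0$ commute with the action of the whole symmetric group $S_m$ on the edge indices. The claim for $\Gamma$ then follows at once, since $\Gamma\subseteq S_m$. Both sides of each commutation relation are linear, so it suffices to check them on basis monomials $\boldsymbol{x}_S$. For a permutation $\sigma\in S_m$, the action described in the paper permutes indices, and on monomials it reads $\sigma\cdot\boldsymbol{x}_S=\boldsymbol{x}_{\sigma(S)}$ (up to the convention $\sigma$ versus $\sigma^{-1}$, which plays no role). So $\sigma$ acts on the basis by a bijection of subsets of $[m]$ that preserves cardinality and commutes with complementation: $\sigma(\overline S)=\overline{\sigma(S)}$.

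The key steps are as follows.
\begin{enumerate}
\item[(a)] For $D_0$: since $|\sigma(S)|=|S|$, both $\sigma D_0(\boldsymbol{x}_S)$ and $D_0(\sigma\boldsymbol{x}_S)$ equal $-(m-2|S|)\boldsymbol{x}_{\sigma(S)}$.
\item[(b)] For $D_-$: we compute
$$
\sigma D_-(\boldsymbol{x}_S)=\sum_{e\in S}\boldsymbol{x}_{\sigma(S)\setminus\{\sigma(e)\}}.
$$
Reindexing by $e'=\sigma(e)$, which runs over $\sigma(S)$, gives
$$
\sum_{e'\in\sigma(S)}\boldsymbol{x}_{\sigma(S)\setminus\{e'\}}=D_-(\boldsymbol{x}_{\sigma(S)}).
$$
Here we use that $\sigma$ is a bijection, so that $\sigma(S\setminus\{e\})=\sigma(S)\setminus\{\sigma(e)\}$.
\item[(c)] For $D_+$: either repeat the same reindexing over $\overline S$, or use the theorem $D_+=\omega D_-\omega$ together with $\sigma\omega=\omega\sigma$, which follows from $\sigma(\overline S)=\overline{\sigma(S)}$.
\end{enumerate}

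For the invariance claims, $A_k$ is spanned by the $\boldsymbol{x}_S$ with $|S|=k$, and $\sigma$ preserves $|S|$, so $A_k$ is $\Gamma$-invariant. Let $f\in P_k$ and $\gamma\in\Gamma$. Then $\gamma f\in A_k$, and $D_-(\gamma f)=\gamma D_-f=0$, so $\gamma f\in P_k$. More generally, the kernel of an equivariant map restricted to an invariant subspace is invariant.

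No step is a genuine obstacle. The only point needing care is stating the action on monomials consistently with the paper's convention $\sigma\cdot x_{ij}=x_{\sigma^{-1}(i)\sigma^{-1}(j)}$, that is, $\gamma\cdot\boldsymbol{x}_S=\boldsymbol{x}_{\gamma^{-1}(S)}$ under the relabelling. The reindexing argument works equally well for $\gamma$ or $\gamma^{-1}$, since each is a bijection of $[m]$.
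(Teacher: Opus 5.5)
Your proposal is correct and follows essentially the same argument as the paper: check on basis monomials $\boldsymbol{x}_S$, reindex the sums for $D_\pm$ via the bijection $e\mapsto\gamma(e)$, and handle $D_0$ by $|\gamma(S)|=|S|$. The differences are only cosmetic. You work over all of $S_m$, a fact the paper itself uses right after the proof. You offer the shortcut $D_+=\omega D_-\omega$ with $\sigma\omega=\omega\sigma$. You also flag the $\sigma$ versus $\sigma^{-1}$ convention, which the paper silently glosses over when it writes $\gamma\cdot\boldsymbol{x}_S=\boldsymbol{x}_{\gamma(S)}$.
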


\begin{proof}
It is enough to check the statement on basis monomials
$\boldsymbol{x}_S$, $S\subseteq[m]$. Let $\gamma\in\Gamma$. Then
$$
\gamma\cdot \boldsymbol{x}_S=\boldsymbol{x}_{\gamma(S)}.
$$
We have
$$
D_+(\gamma\cdot \boldsymbol{x}_S)
=
D_+(\boldsymbol{x}_{\gamma(S)})
=
\sum_{e\notin \gamma(S)}
\boldsymbol{x}_{\gamma(S)\cup\{e\}}.
$$
Since $e\notin\gamma(S)$ if and only if $e=\gamma(f)$ for some
$f\notin S$, it follows that
$$
D_+(\gamma\cdot \boldsymbol{x}_S)
=
\sum_{f\notin S}
\boldsymbol{x}_{\gamma(S\cup\{f\})}
=
\gamma\cdot D_+(\boldsymbol{x}_S).
$$
Hence $D_+\gamma=\gamma D_+$.

Similarly,
$$
D_-(\gamma\cdot \boldsymbol{x}_S)
=
D_-(\boldsymbol{x}_{\gamma(S)})
=
\sum_{e\in\gamma(S)}
\boldsymbol{x}_{\gamma(S)\setminus\{e\}}
=
\sum_{f\in S}
\boldsymbol{x}_{\gamma(S\setminus\{f\})}
=
\gamma\cdot D_-(\boldsymbol{x}_S).
$$
Therefore $D_-\gamma=\gamma D_-$.

Finally, since $|\gamma(S)|=|S|$, the operator $D_0$, which depends only on
the number of edges, also commutes with the action of $\Gamma$. Hence
$D_+$, $D_-$, and $D_0$ are $\Gamma$-equivariant.

It follows immediately that $A_k$ and $P_k$ are $\Gamma$-invariant subspaces.
\end{proof}

Since $D_-$ commutes with the action of $S_m$, the kernel of $D_-$ is an
$S_m$-invariant subspace, that is, $P_k$ is an $S_m$-submodule of $A_k$.

\begin{theorem}
For each
$$
0\le k\le \left\lfloor\frac m2\right\rfloor,
$$
there is an isomorphism of $S_m$-modules
$$
P_k\cong S^{(m-k,k)}.
$$
\end{theorem}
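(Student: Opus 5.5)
The plan is to argue by induction on $k$ using the direct sum decomposition $A_k=P_k\oplus D_+(A_{k-1})$, valid for $k\le\lfloor m/2\rfloor$, together with Young's rule. Everything here is equivariant for the full group $S_m$: the proof that $D_\pm$ commute with $\Gamma$ used only that $\gamma$ permutes $[m]$, so it works verbatim for every $\sigma\in S_m$. Hence $D_+$ and $D_-$ are $S_m$-module maps, $P_k$ is an $S_m$-submodule, and $D_+(A_{k-1})$ is an $S_m$-submodule of $A_k$.

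First, for $1\le k\le\lfloor m/2\rfloor$, $D_+:A_{k-1}\to A_k$ is injective, as noted after Theorem~\ref{primitive_dimensions}. So $D_+(A_{k-1})\cong A_{k-1}\cong M^{(m-k+1,k-1)}$ as $S_m$-modules. Then $A_k=P_k\oplus D_+(A_{k-1})$ gives an isomorphism of $S_m$-modules
$$
M^{(m-k,k)}\cong P_k\oplus M^{(m-k+1,k-1)}.
$$
By Young's rule,
$$
M^{(m-k,k)}\cong\bigoplus_{j=0}^{k}S^{(m-j,j)}
\quad\text{and}\quad
M^{(m-k+1,k-1)}\cong\bigoplus_{j=0}^{k-1}S^{(m-j,j)},
$$
since $\min(k,m-k)=k$ in this range.

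By complete reducibility (Maschke) and uniqueness of multiplicities of irreducible $S_m$-modules, we may cancel the common summands, giving $P_k\cong S^{(m-k,k)}$. The case $k=0$ is immediate: $P_0=A_0=\langle 1\rangle$ is the trivial module $S^{(m)}$. As a consistency check, this agrees with $\dim P_k=\binom mk-\binom m{k-1}=f^{(m-k,k)}$ from \eqref{dim_S}.

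The only delicate point is making sure the direct sum $A_k=P_k\oplus D_+(A_{k-1})$ is really a decomposition into $S_m$-submodules, not merely a vector space decomposition. This needs $S_m$-equivariance of $D_+$ and $D_-$, which I would state explicitly by rerunning the earlier computation with an arbitrary $\sigma\in S_m$. I would also make sure the sum is direct: $P_k\cap D_+(A_{k-1})=0$ follows from the $\mathfrak{sl}_2$-structure, because a lowest weight vector of weight $-(m-2k)<0$ cannot lie in the image of $D_+$ from strictly smaller chains. The dimensions then add up to $\binom mk$. Alternatively, one could avoid induction and the direct sum altogether by exhibiting the polynomials $\Delta_T$ for standard tableaux $T$ of shape $(m-k,k)$ inside $P_k$. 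Each $\Delta_T$ is primitive by \eqref{Leibniz}, since it is a product of differences $x_a-x_b$ with disjoint supports, each killed by $D_-$. These $\Delta_T$ form a basis of $S^{(m-k,k)}$, so $S^{(m-k,k)}\subseteq P_k$, and equality follows from the dimension count. I would present the multiplicity-cancellation argument as the main proof and mention the explicit embedding as a concrete realization.
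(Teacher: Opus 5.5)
Your argument is correct, but your main route is different from the paper's. The paper's proof is what you list as your alternative. Each Specht polynomial $\Delta_T=\prod_i(x_{\alpha_i}-x_{\beta_i})$ is killed by $D_-$ via \eqref{Leibniz}. The standard $\Delta_T$ span a copy of $S^{(m-k,k)}$ inside $P_k$. Equality then follows from the dimension count of Theorem~\ref{primitive_dimensions} and \eqref{dim_S}.

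Your main proof works at the level of isomorphism classes instead. The $S_m$-equivariant splitting $A_k=P_k\oplus D_+(A_{k-1})$, together with injectivity of $D_+$ below the middle level, gives $M^{(m-k,k)}\cong P_k\oplus M^{(m-k+1,k-1)}$. Cancelling in Young's rule then leaves $S^{(m-k,k)}$.

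What your route buys:
\begin{itemize}
\item It needs neither the polynomial realization of Specht modules nor the hook-length formula \eqref{dim_S}. The realization rests on a straightening fact the paper only quotes: the span of the $\Delta_T$ is $S_m$-stable and the standard ones form a basis.
\item It actually reproves $\dim S^{(m-k,k)}=\binom mk-\binom m{k-1}$.
\item It identifies $P_k$ canonically as the $(m-k,k)$-isotypic component of $A_k$, with $D_+(A_{k-1})$ carrying all the other components.
\end{itemize}
What the paper's route buys is an explicit basis of $P_k$. That basis is exactly what Section~5 uses to form $R_\Gamma(\Delta_T)$ and the matrices $C_k$.

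Two small remarks on your write-up.

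First, no induction hypothesis is ever used. You apply Young's rule directly to $k$ and $k-1$, so the ``induction'' framing can be dropped.

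Second, your justification of $P_k\cap D_+(A_{k-1})=0$ via a weight $-(m-2k)<0$ misses the case $k=m/2$, where the weight is $0$. Directness still holds there, for either of two reasons:
\begin{itemize}
\item In a fixed decomposition into irreducibles, the weight-$w$ part of $\ker D_-$ lies in components $V_{-w}$. The weight-$w$ part of $\operatorname{im}D_+$ lies in components $V_d$ with $d>-w$.
\item More simply, $D_+$ and $D_-$ are adjoint for the inner product making the $\boldsymbol{x}_S$ orthonormal. Hence $P_k=D_+(A_{k-1})^{\perp}$ inside $A_k$ for every $k$.
\end{itemize}
In any case the paper already asserts this splitting, so you could simply cite it.
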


\begin{proof}
We use the polynomial realization of two-row Specht modules constructed
above.
If $T$ is a standard Young tableau of shape $(m-k,k)$, and
$$
(\alpha_1,\beta_1),\dots,(\alpha_k,\beta_k),
$$
are its two-cell columns, then the corresponding Specht element has the form
$$
\Delta_T=\prod_{i=1}^{k}(x_{\alpha_i}-x_{\beta_i}).
$$
For each factor we have
$
D_-(x_{\alpha_i}-x_{\beta_i})=1-1=0.
$
Since the variables occurring in different factors are pairwise distinct, the
operator $D_-$ acts on such a product as a derivation, see~\eqref{Leibniz}.
Therefore
$$
D_-(\Delta_T)
=
\sum_{i=1}^{k}
\left(\prod_{\ell<i}(x_{\alpha_\ell}-x_{\beta_\ell})\right)
D_-(x_{\alpha_i}-x_{\beta_i})
\left(\prod_{\ell>i}(x_{\alpha_\ell}-x_{\beta_\ell})\right)
=0.
$$
Hence
$
\Delta_T\in P_k.
$

On the other hand, the elements $\Delta_T$, where $T$ runs through all
standard tableaux of shape $(m-k,k)$, form a basis of the Specht module
$S^{(m-k,k)}$. Therefore they generate in $P_k$ an $S_m$-submodule
isomorphic to $S^{(m-k,k)}$.
Since the constructed submodule has the same dimension, see \eqref{dim_S},
as $P_k$, it follows that
$
P_k\cong S^{(m-k,k)}
$
as an $S_m$-module.
\end{proof}

Thus, the primitive spaces $P_k$ realize precisely the two-row Specht modules.
In the next section, the Schur--Weyl decomposition will refine this fact and
explain why these Specht modules appear as multiplicity spaces for the
corresponding $\mathfrak{sl}_2$-chains.


\subsection{The case $n=4$: restriction of Specht modules to 
$\Gamma$}

Recall that the group $\Gamma$ is isomorphic to $S_4$, but it acts not on the
four vertices, but on the six edges of the complete graph $K_4$.

In the decomposition for $m=6$, four two-row Specht modules of the group
$S_6$ occur:
$$
S^{(6)},\qquad S^{(5,1)},\qquad S^{(4,2)},\qquad S^{(3,3)}.
$$
After restriction to the subgroup $\Gamma$, these Specht modules of the group
$S_6$ are, in general, no longer irreducible and decompose into irreducible
Specht modules of the group $\Gamma\simeq S_4$.

Denote the irreducible $\Gamma\simeq S_4$-modules by
$$
U_{(4)},\quad U_{(3,1)},\quad U_{(2,2)},\quad U_{(2,1,1)},\quad U_{(1^4)},
$$
indexed by all partitions of the number $4$.

Let us find their polynomial realization directly in the edge algebra
$\mathcal A$. For this purpose, introduce the incident edge functions
$$
\rho_i:=\sum_{j\ne i}x_{ij},
\qquad i=1,2,3,4.
$$
Taking into account the relabelling \eqref{perepoz}, we have
$$
\begin{aligned}
\rho_1&=x_{12}+x_{13}+x_{14}=x_1+x_2+x_3,\\
\rho_2&=x_{12}+x_{23}+x_{24}=x_1+x_4+x_5,\\
\rho_3&=x_{13}+x_{23}+x_{34}=x_2+x_4+x_6,\\
\rho_4&=x_{14}+x_{24}+x_{34}=x_3+x_5+x_6.
\end{aligned}
$$
The function $\rho_i$ is the degree of the vertex $i$ as a function on
graphs. If $\sigma\in S_4$, then the renumbering of vertices sends the edges
incident to vertex $i$ to the edges incident to vertex $\sigma(i)$. Therefore
$
\sigma\cdot \rho_i=\rho_{\sigma(i)}.
$
Hence the substitution of vertex variables
$
y_i\mapsto \rho_i
$
is $\Gamma$-equivariant. Consequently, the standard polynomial realizations
of Specht modules of the group $S_4$ in the variables $y_1,\dots,y_4$ give
realizations of the corresponding $\Gamma$-modules $U_\lambda$ in the edge
algebra $\mathcal A$.

In particular, we have the following natural copies of irreducible
$\Gamma$-modules in $\mathcal A$:
$$
U_{(4)}=\langle 1\rangle.
$$
The standard three-dimensional module is realized by differences of vertex
degrees:
$$
U_{(3,1)}
=
\langle
\rho_1-\rho_2,\ \rho_1-\rho_3,\ \rho_1-\rho_4
\rangle.
$$
Explicitly:
$$
\begin{aligned}
\rho_1-\rho_2&=(x_2+x_3)-(x_4+x_5),\\
\rho_1-\rho_3&=(x_1+x_3)-(x_4+x_6),\\
\rho_1-\rho_4&=(x_1+x_2)-(x_5+x_6).
\end{aligned}
$$

For the module $U_{(2,2)}$, one may take the basis
$$
U_{(2,2)}
=
\left\langle
(\rho_1-\rho_3)(\rho_2-\rho_4),\
(\rho_1-\rho_2)(\rho_3-\rho_4)
\right\rangle.
$$
This corresponds to the two standard tableaux of shape $(2,2)$. Note that we
have obtained one of the possible realizations of the abstract module
$U_{(2,2)}$, although it can also be realized in $A_1$, for example as
$$
U_{(2,2)}=\left\langle
x_1-x_2-x_5+x_6,\ 
x_1-x_3-x_4+x_6
\right\rangle.
$$

For the module $U_{(2,1,1)}$ we have the following polynomial realization:
$$
\begin{aligned}
U_{(2,1,1)}
=
\Bigl\langle\,
&(\rho_1-\rho_3)(\rho_1-\rho_4)(\rho_3-\rho_4),(\rho_1-\rho_2)(\rho_1-\rho_4)(\rho_2-\rho_4),(\rho_1-\rho_2)(\rho_1-\rho_3)(\rho_2-\rho_3)
\,\Bigr\rangle.
\end{aligned}
$$
Finally, the sign module is realized by the Vandermonde product
$$
U_{(1^4)}
=
\left\langle
\prod_{1\le i<j\le4}(\rho_i-\rho_j)
\right\rangle.
$$

\medskip

The characters of these modules on the conjugacy classes of $S_4$ have the
standard form, see for example \cite[\S 2.3]{FH}:
$$
\begin{array}{c|ccccc}
 & 1^4 & 2\,1^2 & 2^2 & 3\,1 & 4\\
\hline
U_{(4)}       & 1 & 1  & 1  & 1  & 1\\
U_{(3,1)}     & 3 & 1  & -1 & 0  & -1\\
U_{(2,2)}     & 2 & 0  & 2  & -1 & 0\\
U_{(2,1,1)}   & 3 & -1 & -1 & 0  & 1\\
U_{(1^4)}     & 1 & -1 & 1  & 1  & -1
\end{array}
$$
The sizes of the conjugacy classes are, respectively,
$$
1,\quad 6,\quad 3,\quad 8,\quad 6.
$$

We now need to know to which cycle types in $S_6$ these classes pass under
the action of $S_4$ on the edges of $K_4$. We have:
$$
\begin{array}{c|ccccc}
\text{type in }S_4      & 1^4 & 2\,1^2 & 2^2 & 3\,1 & 4\\
\hline
\text{type on edges in }S_6
& 1^6 & 1^2 2^2 & 1^2 2^2 & 3^2 & 4\,2
\end{array}
$$
For example, the transposition $(12)$ fixes the edges $12$ and $34$ and
interchanges in pairs the edges $13,23$ and $14,24$; hence its type on the
edges is $1^2 2^2$.
A double transposition has the same type on the edges. A 3-cycle gives two
orbits of length $3$, while a 4-cycle gives one orbit of length $4$ and one
orbit of length $2$.

The characters of the required Specht modules of $S_6$, restricted to these
classes, have the following form:
$$
\begin{array}{c|ccccc}
 & 1^4 & 2\,1^2 & 2^2 & 3\,1 & 4\\
\hline
\operatorname{Res}^{S_6}_{\Gamma}S^{(6)}
    & 1 & 1 & 1 & 1 & 1\\
\operatorname{Res}^{S_6}_{\Gamma}S^{(5,1)}
    & 5 & 1 & 1 & -1 & -1\\
\operatorname{Res}^{S_6}_{\Gamma}S^{(4,2)}
    & 9 & 1 & 1 & 0 & 1\\
\operatorname{Res}^{S_6}_{\Gamma}S^{(3,3)}
    & 5 & 1 & 1 & 2 & -1
\end{array}
$$
Let us briefly explain the origin of these values. For $S^{(6)}$, the
character is identically equal to $1$. For $S^{(5,1)}$, the standard formula
is used:
$$
\chi^{(5,1)}(\tau)=\#\operatorname{Fix}(\tau)-1,
$$
where $\tau\in S_6$. Therefore, on the types $1^6,1^2 2^2,3^2,4\,2$ we
obtain, respectively,
$$
5,\quad 1,\quad -1,\quad -1.
$$
For $S^{(4,2)}$, one may use the decomposition of the permutation module on
two-element subsets:
$$
M^{(4,2)}\cong S^{(6)}\oplus S^{(5,1)}\oplus S^{(4,2)}.
$$
The character of $M^{(4,2)}$ is equal to the number of two-element subsets
fixed by the permutation. Subtracting the characters of $S^{(6)}$ and
$S^{(5,1)}$ gives the row for $S^{(4,2)}$. Similarly,
$$
M^{(3,3)}
\cong
S^{(6)}\oplus S^{(5,1)}\oplus S^{(4,2)}\oplus S^{(3,3)},
$$
and the character of $M^{(3,3)}$ counts fixed three-element subsets. This
gives the row for $S^{(3,3)}$.

Now the decomposition of each restricted Specht module into irreducible
$\Gamma\simeq S_4$-modules is computed by the usual formula for the scalar
product of characters:
$$
\langle \chi,\psi\rangle_{S_4}
=
\frac1{24}\sum_{C}|C|\,\chi(C)\psi(C),
$$
where the sum is taken over the conjugacy classes of $S_4$.

Thus we obtain:
$$
\operatorname{Res}^{S_6}_{\Gamma}S^{(6)}
\cong
U_{(4)},
$$
$$
\operatorname{Res}^{S_6}_{\Gamma}S^{(5,1)}
\cong
U_{(3,1)}\oplus U_{(2,2)},
$$
$$
\operatorname{Res}^{S_6}_{\Gamma}S^{(4,2)}
\cong
U_{(4)}\oplus U_{(3,1)}\oplus U_{(2,2)}
\oplus U_{(2,1,1)},
$$
$$
\operatorname{Res}^{S_6}_{\Gamma}S^{(3,3)}
\cong
U_{(4)}\oplus U_{(3,1)}\oplus U_{(1^4)}.
$$
For example, for $S^{(4,2)}$ the character of the restriction is
$$
(9,1,1,0,1).
$$
Its scalar products with the rows of the character table of $S_4$ are
$$
1,\quad 1,\quad 1,\quad 1,\quad 0,
$$
which gives
$$
\operatorname{Res}^{S_6}_{\Gamma}S^{(4,2)}
\cong
U_{(4)}\oplus U_{(3,1)}\oplus U_{(2,2)}
\oplus U_{(2,1,1)}.
$$

Now we use the decomposition
$$
A_k\cong M^{(6-k,k)}
\cong
\bigoplus_{j=0}^{\min(k,6-k)}S^{(6-j,j)}
$$
and obtain the decompositions of the homogeneous components of the algebra
$\mathcal A$ as $\Gamma$-modules.

For $k=0$:
$$
A_0\cong U_{(4)}.
$$

For $k=1$:
$$
A_1\cong
U_{(4)}\oplus U_{(3,1)}\oplus U_{(2,2)}.
$$

For $k=2$:
$$
A_2\cong
2U_{(4)}
\oplus
2U_{(3,1)}
\oplus
2U_{(2,2)}
\oplus
U_{(2,1,1)}.
$$

For $k=3$:
$$
A_3\cong
3U_{(4)}
\oplus
3U_{(3,1)}
\oplus
2U_{(2,2)}
\oplus
U_{(2,1,1)}
\oplus
U_{(1^4)}.
$$

Taking into account the involution \eqref{involution}, we also have
$$
A_4\cong A_2,\qquad A_5\cong A_1,\qquad A_6\cong A_0.
$$

Therefore, the full decomposition of the algebra
$$
\mathcal A=\bigoplus_{k=0}^{6}A_k
$$
as a $\Gamma$-module has the form
$$
\mathcal A
\cong
11U_{(4)}
\oplus
9U_{(3,1)}
\oplus
8U_{(2,2)}
\oplus
3U_{(2,1,1)}
\oplus
U_{(1^4)}.
$$
Checking the dimension:
$$
11\cdot1+9\cdot3+8\cdot2+3\cdot3+1\cdot1
=
64
=
2^6
=
\dim\mathcal A.
$$

In particular, the multiplicity of the trivial module $U_{(4)}$ in
$\mathcal A$ is equal to $11$. This agrees with the fact that
$$
\dim \mathcal A^\Gamma=11,
$$
that is, with the number of non-isomorphic simple graphs on four vertices.


\subsection{Orbit sums and the invariant algebra 
$\mathcal A^\Gamma$}

The algebra of $\Gamma$-invariants is the subalgebra
$$
\mathcal A^\Gamma
=
\{f\in\mathcal A:\gamma\cdot f=f\ \text{for all }\gamma\in\Gamma\}.
$$
Since $\Gamma$ acts on $\mathcal A$ by algebra automorphisms,
$\mathcal A^\Gamma$ is a subalgebra of $\mathcal A$.

For a subset $S\subseteq[m]$, denote its $\Gamma$-orbit by
$$
\Gamma\cdot S=\{\gamma(S):\gamma\in\Gamma\}.
$$
The corresponding \emph{orbit sum} is the element
\begin{equation}
\mathbf O(\boldsymbol{x}_S):=\sum_{T\in\Gamma\cdot S}\boldsymbol{x}_T.
\end{equation}
If $G$ is a graph with edge set $S$, then $\mathbf O(\boldsymbol{x}_S)$ is
the sum of all monomials corresponding to graphs isomorphic to $G$.

\begin{theorem}
Let one representative $S$ be chosen from each $\Gamma$-orbit on the set of
all subsets of $[m]$. Then the orbit sums
$$
\mathbf O(\boldsymbol{x}_S),
$$
form a basis of the space $\mathcal A^\Gamma$.
\end{theorem}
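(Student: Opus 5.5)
The plan is to use the fact that $\Gamma$ acts on $\mathcal A$ by permuting the monomial basis $\{\boldsymbol{x}_S: S\subseteq[m]\}$, via $\gamma\cdot\boldsymbol{x}_S=\boldsymbol{x}_{\gamma(S)}$. So $\mathcal A$ is the permutation module on the power set of $[m]$. The invariants of a permutation module are spanned by the orbit indicator vectors, and I will prove this in three steps: invariance, linear independence, and spanning.

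First, each orbit sum $\mathbf O(\boldsymbol{x}_S)$ lies in $\mathcal A^\Gamma$. For $\gamma\in\Gamma$, the map $T\mapsto\gamma(T)$ is a bijection of the orbit $\Gamma\cdot S$ onto itself, since the orbit is closed under $\Gamma$ and $\gamma$ is injective on subsets. Hence
\[
\gamma\cdot\mathbf O(\boldsymbol{x}_S)=\sum_{T\in\Gamma\cdot S}\boldsymbol{x}_{\gamma(T)}=\mathbf O(\boldsymbol{x}_S).
\]

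Second, the orbit sums for distinct orbit representatives are linearly independent. Distinct orbits are disjoint subsets of the power set, so these sums are supported on pairwise disjoint sets of basis monomials, and each sum is nonzero.

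Third, the orbit sums span $\mathcal A^\Gamma$. Let $f=\sum_S c_S\boldsymbol{x}_S$ be invariant. Comparing coefficients in $\gamma\cdot f=f$ gives $c_{\gamma(S)}=c_S$ for all $\gamma$ and $S$, so $c$ is constant on each orbit. Grouping terms by orbits then yields
\[
f=\sum_{\text{orbits}} c_{S}\,\mathbf O(\boldsymbol{x}_S).
\]
The coefficient comparison is legitimate because $\gamma$ permutes the monomial basis, so the coefficient of $\boldsymbol{x}_{\gamma(S)}$ in $\gamma\cdot f$ is $c_S$. This step is the only real point of the argument and is routine. An alternative route to spanning is to apply the averaging (Reynolds) operator $\frac{1}{|\Gamma|}\sum_\gamma\gamma$: it fixes $f$, and it sends $\boldsymbol{x}_S$ to a positive multiple of $\mathbf O(\boldsymbol{x}_S)$ by the orbit–stabilizer theorem.

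As a consequence, $\dim\mathcal A^\Gamma$ equals the number of orbits, that is, the number of isomorphism classes of graphs on $n$ vertices. Since $\Gamma$ preserves $|S|$, the same argument restricted to $A_k$ shows that $\dim A_k^\Gamma$ is the number of isomorphism classes of graphs with $k$ edges.
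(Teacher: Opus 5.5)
Your proof is correct and is essentially the paper's argument. The paper gets spanning from the Reynolds projector $R_\Gamma$, which maps onto $\mathcal A^\Gamma$ and sends each $\boldsymbol{x}_S$ to a nonzero multiple of $\mathbf O(\boldsymbol{x}_S)$; this is your stated alternative, and your direct coefficient comparison does the same job more elementarily. You also prove linear independence explicitly via disjoint supports, which the paper leaves implicit, so your version is if anything more complete.
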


The proof follows from a well-known fact from the invariant theory of finite
groups. For the action of $\Gamma$ on $\mathcal A$, the Reynolds operator is
well-defined:
$$
R_\Gamma(f):=\frac{1}{|\Gamma|}\sum_{\gamma\in\Gamma}\gamma\cdot f,
\qquad f\in\mathcal A.
$$
It is a linear projector from $\mathcal A$ onto the invariant subalgebra
$\mathcal A^\Gamma$,
see, for example, \cite{DK,Sturmfels}. The orbit sum
$\mathbf O(\boldsymbol{x}_S)$ differs from $R(\boldsymbol{x}_S)$ only by a
nonzero scalar factor. Therefore, the orbit sums taken over representatives
of $\Gamma$-orbits naturally give a basis of the invariant space
$\mathcal A^\Gamma$.

Since the action of $\Gamma$ preserves degree, this basis is compatible with
the rank structure:
$$
\mathcal A^\Gamma=\bigoplus_{k=0}^m A_k^\Gamma.
$$
The dimension $\dim A_k^\Gamma$ is equal to the number of non-isomorphic
simple graphs on $n$ vertices with $k$ edges.

\subsection{The ring structure of $\mathcal A^\Gamma$ and monogenicity}

We now consider $\mathcal A^\Gamma$ as a commutative algebra. From the
functional point of view, the structure of $\mathcal A^\Gamma$ is simple:
it is the algebra of all functions on the set of $\Gamma$-orbits.
Denote by
$$
\Omega_n:=\{0,1\}^m/\Gamma,
$$
the set of $\Gamma$-orbits on the Boolean cube. The elements of $\Omega_n$
correspond to isomorphism classes of simple graphs on $n$ vertices. Since the
elements of $\mathcal A^\Gamma$ are precisely those functions in
$\mathcal A\cong \operatorname{Fun}(\{0,1\}^m,\mathbb C)$ which are constant
on $\Gamma$-orbits, we have a natural isomorphism of algebras
\begin{equation}
\mathcal A^\Gamma\cong \operatorname{Fun}(\Omega_n,\mathbb C).
\end{equation}

Let
$
\Omega_n=\{\omega_1,\dots,\omega_N\},
N=|\Omega_n|.
$
Then
$$
\operatorname{Fun}(\Omega_n,\mathbb C)\cong \mathbb C^N,
$$
as a commutative $\mathbb C$-algebra.
Thus, the ring structure of $\mathcal A^\Gamma$ is the structure of a direct
product of a finite number of copies of the field $\mathbb C$.
Therefore, just like the algebra $\mathcal A$, the algebra
$\mathcal A^\Gamma$ is monogenic.

\begin{theorem}
The algebra $\mathcal A^\Gamma$ is monogenic as an abstract
$\mathbb C$-algebra:
$$
\mathcal A^\Gamma=\mathbb C[z],
$$
for some $z\in\mathcal A^\Gamma$.
\end{theorem}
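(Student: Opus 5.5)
The plan is to mimic the proof of Theorem~\ref{A_monogenic}, now working on the finite set $\Omega_n$ of $\Gamma$-orbits instead of on the whole Boolean cube. By the isomorphism $\mathcal A^\Gamma\cong\operatorname{Fun}(\Omega_n,\mathbb C)$ established above, it suffices to find one invariant function $z\in\mathcal A^\Gamma$ that separates the orbits, that is, takes pairwise distinct values on $\omega_1,\dots,\omega_N$. Once such a $z$ is available, the Lagrange interpolation argument goes through word for word. For each $i$ I would put
$$
L_i(t)=\prod_{j\neq i}\frac{t-z(\omega_j)}{z(\omega_i)-z(\omega_j)}.
$$
Then $L_i(z)\in\mathbb C[z]$ is the characteristic function of the orbit $\omega_i$. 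These characteristic functions form a basis of $\operatorname{Fun}(\Omega_n,\mathbb C)$, which gives $\mathcal A^\Gamma=\mathbb C[z]$. The inclusion $\mathbb C[z]\subseteq\mathcal A^\Gamma$ is automatic because $\mathcal A^\Gamma$ is a subalgebra.

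To construct a separating element explicitly, I would use the orbit-sum basis from the preceding theorem. Let $\chi_i\in\mathcal A^\Gamma$ denote the indicator of the orbit $\omega_i$; as an element of $\mathcal A$ it is the sum of the indicators of the points of that orbit. I then set
$$
z=\sum_{i=1}^N c_i\,\chi_i
$$
with pairwise distinct constants, for instance $c_i=i$. This $z$ is constant on orbits, hence $\Gamma$-invariant, and it takes the distinct values $c_i$ on the distinct orbits $\omega_i$.

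A more conceptual alternative is to take the Reynolds average $R_\Gamma$ of a suitable generic function. One could also simply note that a generic linear combination of the basis orbit sums $\mathbf O(\boldsymbol x_S)$ separates orbits. Two orbits $\omega\ne\omega'$ are distinguished by some basis element, since these span all functions on $\Omega_n$. So the set of coefficient vectors giving a collision is a finite union of proper hyperplanes, and its complement in $\mathbb C^N$ is nonempty.

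There is no serious obstacle here. The only point requiring care is to justify that elements of $\mathcal A^\Gamma$ are exactly the functions constant on orbits. This follows because $\Gamma$ acts on $\mathcal A\cong\operatorname{Fun}(\{0,1\}^m,\mathbb C)$ by permuting the points of the cube. Alternatively, one can bypass the explicit construction: $\mathcal A^\Gamma\cong\mathbb C^N$ is a finite étale algebra, so the primitive element argument cited before Theorem~\ref{A_monogenic} applies directly. I would present the explicit version with $z=\sum_i i\,\chi_i$ and the Lagrange polynomials.
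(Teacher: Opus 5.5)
Your proposal is correct and follows the paper's argument: both rest on $\mathcal A^\Gamma\cong\operatorname{Fun}(\Omega_n,\mathbb C)\cong\mathbb C^N$. The paper leads with the primitive-element reasoning and only remarks that Lagrange interpolation with an orbit-separating element gives a constructive proof; that constructive version is exactly what you write out. One small wording issue: the orbit indicators $\chi_i$ are not the orbit sums $\mathbf O(\boldsymbol x_S)$ you refer to as the ``orbit-sum basis''. This does no harm, because your definition of $\chi_i$ stands on its own, and your generic-combination argument correctly covers the orbit sums themselves.
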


This theorem can also be proved constructively, through Lagrange
interpolation, analogously to the proof of Theorem~\ref{A_monogenic};
however, the generating elements obtained in this way are cumbersome. For
practical computations, it is more convenient to use the fact that the
generating element takes distinct values on $\Gamma$-orbits.

\begin{example}{\rm
Let $n=4$. The number of $\Gamma$-orbits on the set of all subsets of edges
is equal to the number of non-isomorphic simple graphs on four vertices:
$$
\dim \mathcal A^\Gamma=11.
$$

Consider three natural orbital invariants:
$
\mathbf O({x}_1),
$
which counts the number of edges of a graph,
$
\mathbf O({x}_1{x}_2),
$
where ${x}_1{x}_2$ corresponds to two adjacent edges, and therefore it counts
the number of two-edge subgraphs, and
$
T=\mathbf O({x}_1{x}_2{x}_4),
$
where ${x}_1{x}_2{x}_4$ corresponds to a triangle, and it counts the number
of triangles in a graph.

Put
$$
z=\mathbf O({x}_1)
+
\mathbf O({x}_1{x}_2)
+
\mathbf O({x}_1{x}_2{x}_4)\in\mathcal A^\Gamma.
$$
We show that this element takes pairwise distinct values on all $11$
isomorphism classes of graphs on four vertices. For this it is enough to
compute the values of the triple of invariants
$$
(e(G),p(G),t(G)),
$$
where $e(G)$ is the number of edges of $G$, $p(G)$ is the number of pairs of
adjacent edges, and $t(G)$ is the number of triangles.

$$
\begin{array}{c|c|c|c|c}
\text{graph type} & e(G) & p(G) & t(G) & z(G)=e(G)+p(G)+t(G)\\
\hline
\varnothing & 0 & 0 & 0 & 0\\
K_2\sqcup 2K_1 & 1 & 0 & 0 & 1\\
2K_2 & 2 & 0 & 0 & 2\\
P_3\sqcup K_1 & 2 & 1 & 0 & 3\\
P_4 & 3 & 2 & 0 & 5\\
K_{1,3} & 3 & 3 & 0 & 6\\
K_3\sqcup K_1 & 3 & 3 & 1 & 7\\
C_4 & 4 & 4 & 0 & 8\\
K_3\text{ with a pendant edge} & 4 & 5 & 1 & 10\\
K_4-e & 5 & 8 & 2 & 15\\
K_4 & 6 & 12 & 4 & 22
\end{array}
$$

Hence the values of the element $z$ on all orbits are pairwise distinct.
Therefore, in the case $n=4$, the invariant algebra $\mathcal A^\Gamma$ is
generated by one element
$$
z=
\mathbf O({x}_1)
+
\mathbf O({x}_1{x}_2)
+
\mathbf O({x}_1{x}_2{x}_4).
$$
}
$\triangle$ \end{example}

\subsection{The invariant algebra from the $\Gamma$-module decomposition}

Let us return to the decomposition of the homogeneous components $A_k$ as
$\Gamma$-modules. It gives another, representation-theoretic, description of
the invariant algebra $\mathcal A^\Gamma$.

Indeed, since
$$
\mathcal A^\Gamma=\bigoplus_{k=0}^{m}A_k^\Gamma,
$$
it is enough to understand the spaces $A_k^\Gamma$. From the decomposition
$$
A_k
\cong
\bigoplus_{j=0}^{\min(k,m-k)}
\operatorname{Res}^{S_m}_{\Gamma}S^{(m-j,j)},
$$
we obtain
$$
A_k^\Gamma
\cong
\bigoplus_{j=0}^{\min(k,m-k)}
\left(
\operatorname{Res}^{S_m}_{\Gamma}S^{(m-j,j)}
\right)^\Gamma.
$$
In other words, the space of invariants $A_k^\Gamma$ is formed precisely
from those components of the restrictions of two-row Specht modules which
contain the trivial $\Gamma$-module.

Denote
$$
a_k
=
\dim
\left(
\operatorname{Res}^{S_m}_{\Gamma}S^{(m-k,k)}
\right)^\Gamma = \dim( P_k^\Gamma), \quad  b_k=\dim A_k^\Gamma.
$$
Then
\begin{equation}\label{dim_AG}
b_k=
\sum_{j=0}^{\min(k,m-k)}a_j.
\end{equation}

Thus, the number of non-isomorphic graphs with $k$ edges can be read not only
as the number of $\Gamma$-orbits on $k$-element subsets of edges, but also as
the sum of the multiplicities of the trivial $\Gamma$-module in the
restrictions of two-row Specht modules.

This gives two complementary descriptions of the same space: the orbital
description
$$
A_k^\Gamma
=
\left\langle
\mathbf O(\boldsymbol{x}_S): |S|=k
\right\rangle,
$$
and the representation-theoretic description
$$
A_k^\Gamma
\cong
\bigoplus_{j=0}^{\min(k,m-k)}
\left(
\operatorname{Res}^{S_m}_{\Gamma}S^{(m-j,j)}
\right)^\Gamma.
$$
The first description is indexed by non-isomorphic graphs with $k$ edges,
whereas the second is indexed by the trivial components in the corresponding
restrictions of Specht modules.

\begin{example}{\rm
In the case $n=4$ we have $m=6$. From the decomposition already obtained
$$
A_0\cong U_{(4)},
$$
$$
A_1\cong
U_{(4)}\oplus U_{(3,1)}\oplus U_{(2,2)},
$$
$$
A_2\cong
2U_{(4)}
\oplus
2U_{(3,1)}
\oplus
2U_{(2,2)}
\oplus
U_{(2,1,1)},
$$
$$
A_3\cong
3U_{(4)}
\oplus
3U_{(3,1)}
\oplus
2U_{(2,2)}
\oplus
U_{(2,1,1)}
\oplus
U_{(1^4)},
$$
we immediately read off the dimensions of the invariant parts:
$$
\dim A_0^\Gamma=1,\qquad
\dim A_1^\Gamma=1,\qquad
\dim A_2^\Gamma=2,\qquad
\dim A_3^\Gamma=3.
$$
Indeed, in each $A_k$ the invariant part is contributed only by copies of the
trivial module $U_{(4)}$.

From the existence of the involution \eqref{involution} we obtain the
isomorphism
$
A_k\cong A_{6-k},
$
and therefore
$$
(\dim A_0^\Gamma,\dim A_1^\Gamma,\dots,\dim A_6^\Gamma)
=
(1,1,2,3,2,1,1).
$$
This is precisely the sequence of the numbers of non-isomorphic simple graphs
on four vertices by the number of edges. Summing up, we obtain
$$
\dim\mathcal A^\Gamma
=
1+1+2+3+2+1+1
=
11,
$$
which coincides with the number of non-isomorphic simple graphs on four
vertices.
}
$\triangle$
\end{example}

Thus, orbit sums give a natural combinatorial basis of $\mathcal A^\Gamma$,
whereas the decomposition of $A_k$ into $\Gamma$-modules explains from which
representation-theoretic components these invariants arise. In particular,
$\mathcal A^\Gamma$ is the trivial isotypic component of the graph algebra
with respect to the action of the group $\Gamma$.

\section{Constructive Schur--Weyl decomposition and graph orbit enumeration}

In the previous sections, the graph algebra $\mathcal A$ was described from
two points of view: through the action of the pair group $\Gamma$, which leads
to orbital graph invariants, and through the operators $D_+,D_0,D_-$, which
define an $\mathfrak{sl}_2$-structure. In this section we combine these two
descriptions by means of a specialization of the Schur--Weyl decomposition.

\subsection{The isomorphism $\mathcal A\cong V_1^{\otimes m}$}

The $\mathfrak{sl}_2$-action constructed above on the graph algebra
$\mathcal A$ has a very natural tensor interpretation.

Recall that $V_1$ denotes the standard irreducible $\mathfrak{sl}_2$-module
of dimension $2$ with basis
$
v_0,\ v_1
$
on which the action of the operators $e_+,e_-,h$ is given by the formulas
\begin{equation}\label{V1_action}
\begin{gathered}
e_+v_0=v_1,\qquad e_+v_1=0,\\
e_-v_1=v_0,\qquad e_-v_0=0,
\\
hv_0=-v_0,\qquad hv_1=v_1.
\end{gathered}
\end{equation}
Thus, $v_0$ has weight $-1$, while $v_1$ has weight $1$.

Now consider the tensor power
$
V_1^{\otimes m}
$
on which the algebra $\mathfrak{sl}_2$ acts diagonally on all tensor factors.
The space $V_1^{\otimes m}$ has a natural tensor basis
$$
v_{\varepsilon_1}\otimes v_{\varepsilon_2}\otimes\cdots\otimes v_{\varepsilon_m},
\qquad
\varepsilon_i\in\{0,1\}.
$$
The basis vectors of this space are indexed by binary words of length $m$,
or, equivalently, by subsets of the set $[m]$.
Indeed, to each subset $S\subseteq[m]$ we assign the tensor basis vector
\begin{equation}\label{ tensor_basis_S}
v_S:=w_1\otimes w_2\otimes\cdots\otimes w_m,
\qquad
w_i=
\begin{cases}
v_1,& i\in S,\\
v_0,& i\notin S.
\end{cases}
\end{equation}
Thus, the presence of the vector $v_1$ in a tensor factor means that the
corresponding edge is ``chosen'', while $v_0$ means that it is ``not chosen''.
Therefore there arises a natural linear map
$$
\Phi:V_1^{\otimes m}\longrightarrow \mathcal A,
$$
defined on basis vectors by the formula
$$
\Phi(v_S)=\boldsymbol{x}_S,
\qquad
S\subseteq[m].
$$
Since both $\{v_S:\ S\subseteq[m]\}$ and
$\{\boldsymbol{x}_S:\ S\subseteq[m]\}$ are bases of the corresponding
spaces, the map $\Phi$ is an isomorphism of vector spaces.
This isomorphism may be viewed as an algebraic formulation of the obvious
combinatorial fact that each of the $m$ possible edges of a graph is either
present or absent.
Thus, the space of all graphs on $n$ vertices is a product of $m$ two-state
systems.

For us it is important that the $\mathfrak{sl}_2$-action realizes elementary
operations of changing these states, that is, that this isomorphism is
compatible with the action of $\mathfrak{sl}_2$. The following theorem holds.

\begin{theorem}\label{A_tensor_power}
The isomorphism $\Phi$ is an isomorphism of $\mathfrak{sl}_2$-modules.
\end{theorem}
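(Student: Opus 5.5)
Since $\Phi$ is already known to be a linear isomorphism, it suffices to check that it intertwines the generators:
$$
\Phi\circ e_+=D_+\circ\Phi,\qquad \Phi\circ e_-=D_-\circ\Phi,\qquad \Phi\circ h=D_0\circ\Phi
$$
on the tensor basis $\{v_S\}$. Everything then extends by linearity. Recall that the diagonal action on $V_1^{\otimes m}$ is
$$
X(w_1\otimes\cdots\otimes w_m)=\sum_{i=1}^m w_1\otimes\cdots\otimes Xw_i\otimes\cdots\otimes w_m ,\qquad X\in\mathfrak{sl}_2.
$$

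First, consider $h$. By \eqref{V1_action}, each factor $v_1$ contributes $+1$ and each $v_0$ contributes $-1$. Hence $h\,v_S=(|S|-(m-|S|))v_S=-(m-2|S|)v_S$. Applying $\Phi$ gives exactly $D_0(\boldsymbol{x}_S)$.

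Second, consider $e_+$. The $i$-th term of the sum vanishes when $i\in S$, since $e_+v_1=0$. When $i\notin S$, that term replaces $v_0$ by $v_1$ in position $i$, producing $v_{S\cup\{i\}}$. Thus
$$
e_+v_S=\sum_{i\in\overline S}v_{S\cup\{i\}},
$$
and $\Phi(e_+v_S)=D_+(\boldsymbol{x}_S)$. This includes the case $S=[m]$, where both sides are zero.

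Third, consider $e_-$. Symmetrically, $e_-v_0=0$ kills the terms with $i\notin S$, while $e_-v_1=v_0$ turns the terms with $i\in S$ into $v_{S\setminus\{i\}}$. This gives $\Phi(e_-v_S)=D_-(\boldsymbol{x}_S)$, including $S=\varnothing$.

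There is no real obstacle here. The only care needed is to check the coefficient conventions: in \eqref{V1_action} the coefficients are all $1$, matching the unit coefficients in $D_\pm$. One should also confirm that the identification $e_\pm\mapsto D_\pm$, $h\mapsto D_0$ used on $\mathcal A$ is the one for which \eqref{sl2_on_A_relations} matches \eqref{sl2_relations}. Since $\Phi$ is bijective and intertwines a generating set of $\mathfrak{sl}_2$, it is an isomorphism of $\mathfrak{sl}_2$-modules.

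As a consistency check, this recovers Theorem~\ref{irred_decomp_A} via the Clebsch--Gordan decomposition of $V_1^{\otimes m}$.
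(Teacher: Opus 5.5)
Your proof is correct and follows essentially the same route as the paper: both verify $\Phi e_\pm = D_\pm \Phi$ on the tensor basis $v_S$ using the derivation rule for the diagonal action. The only difference is that you check $h$ directly, by counting the weights $\pm1$ of the factors to get $-(m-2|S|)$. The paper instead deduces $\Phi h = D_0 \Phi$ from $[e_+,e_-]=h$ together with the already established relation $[D_+,D_-]=D_0$; either way works.
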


\begin{proof}
Since $\Phi$ is already an isomorphism of vector spaces, it is enough to
check that it commutes with the action of the three operators $e_+,e_-,h$.
Let $S\subseteq[m]$, and consider the corresponding basis vector $v_S$ from
\eqref{ tensor_basis_S}.
First compute the action of the operator $e_+$ on $v_S$.
The operator $e_+$ acts on a tensor product as a derivation, and, according
to \eqref{V1_action}, we have
$$
e_+v_0=v_1,\qquad e_+v_1=0.
$$
Hence a nonzero contribution appears only in those tensor factors for which
the corresponding index does not belong to $S$.
Therefore
$$
e_+v_S=\sum_{e\in \overline S} v_{S\cup\{e\}},
\qquad
\overline S=[m]\setminus S.
$$
Applying $\Phi$, we obtain
$$
\Phi(e_+v_S)=\sum_{e\in\overline S}\boldsymbol{x}_{S\cup\{e\}}=D_+(\boldsymbol{x}_S)=D_+(\Phi(v_S)).
$$
Thus,
$$
\Phi(e_+v_S)=D_+(\Phi(v_S)).
$$

Similarly, the operator $e_-$ replaces $v_1$ by $v_0$ in all possible
positions where this replacement does not give zero.
Therefore
$$
e_-v_S=\sum_{e\in S}v_{S\setminus\{e\}}.
$$
After applying $\Phi$, we have
$$
\Phi(e_-v_S)=\sum_{e\in S}\boldsymbol{x}_{S\setminus\{e\}}=D_-(\boldsymbol{x}_S)=D_-(\Phi(v_S)).
$$
Commutativity with the operator $h$ follows automatically from the already
proved commutativity with the operators $e_-$, $e_+$ and from the identity
$[e_+, e_-]=h$.
Thus, for all $S\subseteq[m]$ we have
$$
\Phi(e_+v_S)=D_+(\Phi(v_S)),\qquad
\Phi(e_-v_S)=D_-(\Phi(v_S)),\qquad
\Phi(hv_S)=D_0(\Phi(v_S)).
$$
Since the basis vectors $v_S$ generate the whole space $V_1^{\otimes m}$,
these equalities hold on the entire space.
Therefore $\Phi$ is an isomorphism of $\mathfrak{sl}_2$-modules.
\end{proof}

Theorem~\ref{A_tensor_power} gives a conceptual interpretation of the
graph algebra as a tensor $\mathfrak{sl}_2$-module. To each edge of the
complete graph $K_n$ there corresponds one copy of the standard two-dimensional
module $V_1$, and the whole algebra $\mathcal A$ is identified with the
tensor product of these elementary two-state modules. At the same time, the
isomorphism $\Phi$ is compatible with the natural action of the group $S_m$:
a permutation of the tensor factors in $V_1^{\otimes m}$ corresponds to a
permutation of the edge coordinates $x_1,\dots,x_m$ in the algebra
$\mathcal A$. Thus, the graph algebra appears as a tensor model in which
$\mathfrak{sl}_2$ and the symmetric group $S_m$ act simultaneously. In this
sense it is a natural analogue of the classical situation in the invariant
theory of binary forms, where $\mathfrak{sl}_2$-modules arise from symmetric
powers of the standard module $V_1$.

\subsection{Schur--Weyl duality in the graph algebra setting}

Schur--Weyl duality is one of the basic principles of representation theory,
describing the interaction of two natural actions on a tensor power of a
vector space. Let $V$ be a finite-dimensional vector space over a field of
characteristic zero. On the tensor power
$
V^{\otimes m}
$
there are two symmetries. First, the group $S_m$ acts by permuting tensor
factors:
$$
\sigma\cdot(v_1\otimes\cdots\otimes v_m)
=
v_{\sigma^{-1}(1)}\otimes\cdots\otimes v_{\sigma^{-1}(m)}.
$$
Second, the group $GL(V)$ acts diagonally:
$$
g\cdot(v_1\otimes\cdots\otimes v_m)
=
gv_1\otimes\cdots\otimes gv_m.
$$
These two actions commute.
Schur--Weyl duality~\cite[Chapter~6]{FH} states that each of these two
actions is the centralizer of the other in $\mathrm{End}(V_1^{\otimes m})$.
As a consequence, one obtains the decomposition
$$
  V_1^{\otimes m}
  \cong
  \bigoplus_{\substack{\lambda \vdash m \\ \ell(\lambda) \le 2}}
  S^\lambda \otimes L_\lambda,
$$
where $S^\lambda$ is an irreducible $S_m$-module, that is, a Specht module,
and $L_\lambda$ is the irreducible polynomial $GL_2$-module of highest weight
$\lambda$ \cite[Theorem~9.1.2]{GW}.

We apply this classical Schur--Weyl duality to our graph algebra.

\begin{theorem}
  
  As an $\mathfrak{sl}_2 \times S_m$-module, the graph algebra has the
  following decomposition:
  \begin{equation}\label{ SW}
    \mathcal{A}
    \cong
    \bigoplus_{k=0}^{\lfloor m/2 \rfloor}
    S^{(m-k,k)} \otimes V_{m-2k},
  \end{equation}
  where $S^{(m-k,k)}$ is the Specht module of the group~$S_m$, and
  $V_{m-2k}$ is the standard irreducible $\mathfrak{sl}_2$-module of highest
  weight $m-2k$.
\end{theorem}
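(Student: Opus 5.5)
The plan is to build the decomposition directly inside $\mathcal A$ from the primitive spaces, rather than quoting the abstract $GL_2$ form of Schur--Weyl duality. Then the statement reduces to results already proved: the decomposition \eqref{P_to_A}, the fact that $D_\pm$ commute with permutations of the edge coordinates, and the isomorphism $P_k\cong S^{(m-k,k)}$. Two remarks come first. The proof that $D_\pm,D_0$ commute with $\Gamma$ used only that $\gamma$ permutes $[m]$, so it holds verbatim for every $\sigma\in S_m$. Hence $\mathcal A$ is an $\mathfrak{sl}_2\times S_m$-module, and by Theorem~\ref{A_tensor_power} the map $\Phi$ intertwines this module with $V_1^{\otimes m}$ carrying the diagonal $\mathfrak{sl}_2$-action and the permutation of tensor factors.

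For each $0\le k\le\lfloor m/2\rfloor$ I define
$$
\Psi_k: P_k\otimes V_{m-2k}\longrightarrow \mathcal A,\qquad
\Psi_k(z\otimes v_i)=\frac{1}{(m-2k)(m-2k-1)\cdots(m-2k-i+1)}\,D_+^{\,i}z,
$$
for $i=0,\dots,m-2k$. The normalisation matches the formulas $e_+v_i=(d-i)v_{i+1}$ with $d=m-2k$. By Theorem~\ref{primitive_chain}, the vectors $z,D_+z,\dots,D_+^{m-2k}z$ span a copy of $V_{m-2k}$ with $z$ as lowest weight vector. The relation $D_-D_+^{i}z=i(d-i+1)D_+^{i-1}z$ follows from the $\mathfrak{sl}_2$ relations, since $D_0z=-dz$. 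This is the standard computation, and it shows that $\Psi_k$ intertwines $e_-$ and $h$ as well as $e_+$. So $\Psi_k$ is $\mathfrak{sl}_2$-equivariant. It is $S_m$-equivariant because $D_+$ commutes with $S_m$, with $S_m$ acting on $P_k\otimes V_{m-2k}$ through the first factor only.

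It then remains to show that $\Psi=\bigoplus_k\Psi_k$ is a bijection onto $\mathcal A$. Its image in rank $r$ is $\sum_{k\le\min(r,m-r)}D_+^{\,r-k}P_k$, and \eqref{P_to_A} says this sum is direct and equals $A_r$. To get injectivity on each summand, I need $D_+^{\,r-k}$ to be injective on $P_k$ for $k\le r\le m-k$. This follows from $D_+^{\,m-2k}z\neq0$ in Theorem~\ref{primitive_chain}, since $z\mapsto D_+^{\,r-k}z$ followed by the further powers of $D_+$ is nonzero for every $z\ne0$. Summing over $r$ shows that $\Psi$ is an isomorphism of $\mathfrak{sl}_2\times S_m$-modules. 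Substituting $P_k\cong S^{(m-k,k)}$ from the preceding theorem gives \eqref{ SW}. As a dimension check, $\sum_k\bigl(\binom mk-\binom m{k-1}\bigr)(m-2k+1)=2^m$, the binomial identity recorded earlier.

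The main obstacle is the equivariance of $\Psi_k$ for the combined action. The $\mathfrak{sl}_2$ part needs the normalisation constants to be chosen carefully so that $e_-$ is intertwined, not just $e_+$. The $S_m$ part needs $D_+$ to commute with all of $S_m$, not only with $\Gamma$. Both are short computations. An alternative route is to cite the classical Schur--Weyl decomposition $V_1^{\otimes m}\cong\bigoplus S^\lambda\otimes L_\lambda$, restrict $L_{(m-k,k)}$ from $GL_2$ to $\mathfrak{sl}_2$ to get $V_{m-2k}$, and transport through $\Phi$. I would mention this as a remark, but the constructive argument above is self-contained given the earlier results.
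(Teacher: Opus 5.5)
Your proposal is correct, but it takes a different route from the paper.

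\textbf{The paper's route.} The paper's proof is a short transport argument. It quotes the classical $GL_2$ Schur--Weyl decomposition $V_1^{\otimes m}\cong\bigoplus_{\ell(\lambda)\le 2}S^\lambda\otimes L_\lambda$ and uses $L_{(m-k,k)}\cong\operatorname{Sym}^{m-2k}(V_1)\otimes(\det)^k$. It then restricts to $SL_2$ to obtain $V_{m-2k}$ and pulls everything back through $\Phi$. This implicitly uses that $\Phi$ is also $S_m$-equivariant, and that the $\mathfrak{sl}_2$-action on $V_1^{\otimes m}$ is the differentiated diagonal group action.

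\textbf{Your route.} You build the isomorphism by hand. Your maps $\Psi_k$ carry the constants $1/(d(d-1)\cdots(d-i+1))$, which do not depend on $z$, so $\Psi_k$ is well defined and linear on $P_k\otimes V_{m-2k}$. They intertwine $e_+$, $e_-$ (via $D_-D_+^iz=i(d-i+1)D_+^{i-1}z$) and $h$. They commute with $S_m$ because $D_\pm$ commute with every edge permutation. They are bijective by \eqref{P_to_A}, together with the injectivity of $D_+^{r-k}$ on $P_k$ from Theorem~\ref{primitive_chain}. The identification $P_k\cong S^{(m-k,k)}$ then finishes the proof.

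\textbf{What each buys.} The paper's route is brief and does not depend on the earlier results about $P_k$. In fact it re-derives Theorem~\ref{irred_decomp_A}, and also $P_k\cong S^{(m-k,k)}$ as the lowest-weight space of the $V_{m-2k}$-isotypic part. However, it rests on the double-centralizer theorem and gives only an abstract isomorphism.

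Your route avoids polynomial $GL_2$-representation theory altogether. It gives an explicit intertwiner whose multiplicity space is literally $P_k$, spanned by the Specht polynomials $\Delta_T$. In particular it actually proves $\bigoplus_{s}D_+^sP_k\cong S^{(m-k,k)}\otimes V_{m-2k}$, which the paper asserts without proof later in the same section and uses for the $\Gamma$-invariant picture.

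The price is that your argument inherits the rigor of the results it cites. Notably, the paper obtains \eqref{P_to_A} by a fairly informal ``iteration''. This is not a gap, since it is a standard fact about finite-dimensional $\mathfrak{sl}_2$-modules.
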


\begin{proof}
  By the isomorphism of $\mathfrak{sl}_2$-modules
  $\mathcal{A} \cong V_1^{\otimes m}$, it is enough to apply Schur--Weyl
  duality to $V_1^{\otimes m}$.
  For the two-row partition $\lambda = (m-k, k)$, the irreducible
  $GL_2$-module of highest weight $\lambda$ has the form
  $$
    L_{(m-k,k)}
    \cong
    \operatorname{Sym}^{m-2k}(V_1) \otimes (\det)^k
    \quad\text{as a } GL_2\text{-module;}
  $$
  after restriction to the subgroup $SL_2$, where $\det = 1$, we obtain
  $L_{(m-k,k)}\big|_{SL_2} \cong \operatorname{Sym}^{m-2k}(V_1) = V_{m-2k}$.
  Substituting this into the Schur--Weyl decomposition and returning through
  the isomorphism $\mathcal{A} \cong V_1^{\otimes m}$, we obtain~\eqref{ SW}.
\end{proof}

If in~\eqref{ SW} we forget the action of $S_m$, then the factor
$S^{(m-k,k)}$ remains only as a multiplicity space. Hence, as an
$\mathfrak{sl}_2$-module,
\begin{equation}
  \mathcal A
  \cong
  \bigoplus_{k=0}^{\lfloor m/2 \rfloor}
\dim S^{(m-k,k)}\cdot V_{m-2k}=\bigoplus_{k=0}^{\lfloor m/2 \rfloor}
\left( \binom{m}{k}-\binom{m}{k-1} \right) \cdot V_{m-2k},
\end{equation}
and we again obtain the result of Theorem~\ref{irred_decomp_A}.

If, conversely, we forget the $\mathfrak{sl}_2$-action, then as an
$S_m$-module we have
$$
  \mathcal A
  \cong
  \bigoplus_{k=0}^{\lfloor m/2 \rfloor}
   (m-2k+1)\,S^{(m-k,k)}.
$$
However, in order to describe the space of graph invariants, it is important
to retain the $\mathfrak{sl}_2$-structure as well. Since the action of the
group $\Gamma$ commutes with the action of $\mathfrak{sl}_2$, passing to
$\Gamma$-invariants in the Schur--Weyl decomposition gives an isomorphism of
$\mathfrak{sl}_2$-modules
\begin{equation}\label{ab}
  \mathcal A^\Gamma
  \cong
  \bigoplus_{k=0}^{\lfloor m/2 \rfloor}
  \left(
  \operatorname{Res}^{S_m}_{\Gamma}S^{(m-k,k)}
  \right)^\Gamma
  \otimes V_{m-2k}.
\end{equation}

Passing to dimensions, we obtain
$$
\dim \mathcal A^\Gamma
=
\sum_{k=0}^{\lfloor m/2 \rfloor}
(m-2k+1)
\dim
\left(
\operatorname{Res}^{S_m}_{\Gamma}S^{(m-k,k)}
\right)^\Gamma.
$$

\medskip

Thus, in our case, the role of Schur--Weyl duality is that it explains the
simultaneous presence of three structures: the $\mathfrak{sl}_2$ operators,
the two-row Specht modules of the symmetric group $S_m$, and the graph
invariants with respect to the subgroup $\Gamma$.

\subsection{Orbit-sum coordinates and primitive decomposition}

In the space of invariants $A_r^\Gamma$ two descriptions naturally coexist.
The first is combinatorial: it is given by orbit sums and is therefore indexed
by non-isomorphic graphs with $r$ edges. The second is representation-theoretic:
it is given by primitive components with respect to the operator $D_-$ and
their liftings by the operator $D_+$. The aim of this subsection is to explain
how these two descriptions are related to each other.

The following theorem shows that every invariant from $A_r^\Gamma$ uniquely
decomposes into contributions of primitive $\Gamma$-invariants from lower
subspaces, lifted by the operator $D_+$.

\begin{theorem}\label{ AkGamma_primitive_decomposition}
For each $r=0,1,\dots,m$ one has the decomposition
$$
A_r^\Gamma
=
\bigoplus_{j=0}^{\min(r,m-r)}
D_+^{\,r-j}P_j^\Gamma.
$$
\end{theorem}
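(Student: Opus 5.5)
Our plan is to deduce the statement from the non-invariant decomposition \eqref{P_to_A}, namely $A_r=\bigoplus_{j=0}^{\min(r,m-r)}D_+^{\,r-j}P_j$, using the fact that $D_+$ and $D_-$ commute with $\Gamma$. The idea is that the Reynolds operator
$$
R_\Gamma=\frac{1}{|\Gamma|}\sum_{\gamma\in\Gamma}\gamma
$$
also commutes with $D_+$ and $D_-$. It therefore preserves each summand of \eqref{P_to_A}, and taking $\Gamma$-fixed points term by term should give the claim.

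First we check that each summand $D_+^{\,r-j}P_j$ is a $\Gamma$-invariant subspace of $A_r$. The spaces $P_j$ are $\Gamma$-invariant by the equivariance theorem of Subsection~4.2. Since $\gamma D_+^{\,r-j}=D_+^{\,r-j}\gamma$, we get $\gamma\bigl(D_+^{\,r-j}P_j\bigr)=D_+^{\,r-j}(\gamma P_j)=D_+^{\,r-j}P_j$.

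For a $\Gamma$-stable direct sum decomposition $W=\bigoplus_j W_j$, the fixed space splits as $W^\Gamma=\bigoplus_j W_j^\Gamma$. Indeed, if $f=\sum_j f_j$ with $f_j\in W_j$ is invariant, then $\gamma f=\sum_j\gamma f_j$ with $\gamma f_j\in W_j$. Uniqueness of the decomposition forces $\gamma f_j=f_j$ for every $j$, and the reverse inclusion is trivial. Applying this to \eqref{P_to_A} gives $A_r^\Gamma=\bigoplus_j\bigl(D_+^{\,r-j}P_j\bigr)^\Gamma$.

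It remains to show $\bigl(D_+^{\,r-j}P_j\bigr)^\Gamma=D_+^{\,r-j}\bigl(P_j^\Gamma\bigr)$; this is the only real step.
\begin{itemize}
\item[$\supseteq$:] If $p\in P_j^\Gamma$, then $\gamma D_+^{\,r-j}p=D_+^{\,r-j}\gamma p=D_+^{\,r-j}p$, so $D_+^{\,r-j}p$ is invariant.
\item[$\subseteq$:] Let $f=D_+^{\,r-j}p$ with $p\in P_j$ be invariant. Then $f=R_\Gamma f=D_+^{\,r-j}R_\Gamma p$. Moreover $R_\Gamma p\in P_j^\Gamma$, because $D_-R_\Gamma p=R_\Gamma D_-p=0$ and $R_\Gamma p$ is invariant.
\end{itemize}
No injectivity is needed for the equality of spaces. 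Injectivity does hold, though: $D_+^{\,r-j}$ is injective on $P_j$ for $j\le r\le m-j$ by Theorem~\ref{primitive_chain}. So $D_+^{\,r-j}P_j^\Gamma\cong P_j^\Gamma$, which agrees with the dimension formula \eqref{dim_AG}.

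The main point to handle carefully is the range of $j$ in \eqref{P_to_A}. We take that decomposition from the paper, which derived it via $A_k=P_k\oplus D_+(A_{k-1})$ for $k\le\lfloor m/2\rfloor$. For $r>m/2$ it should be justified by the $\mathfrak{sl}_2$-decomposition of Theorem~\ref{irred_decomp_A}: every irreducible chain starting at $P_j$ reaches degree $r$ exactly when $j\le\min(r,m-r)$. If needed, we will verify this case through the isotypic decomposition of $\mathcal A$. With \eqref{P_to_A} in hand, the argument above is uniform in $r$.
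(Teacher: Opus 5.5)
Your proof is correct and follows the paper's argument essentially step for step. You start from \eqref{P_to_A}, note that each summand $D_+^{\,r-j}P_j$ is $\Gamma$-stable so the invariants split summand by summand, and then use $u=R_\Gamma u=D_+^{\,r-j}R_\Gamma(p)$ with $R_\Gamma(p)\in P_j^\Gamma$ to identify $\bigl(D_+^{\,r-j}P_j\bigr)^\Gamma$ with $D_+^{\,r-j}P_j^\Gamma$. Your explicit argument that fixed points split over a $\Gamma$-stable direct sum, and your remark that \eqref{P_to_A} for $r>m/2$ needs the $\mathfrak{sl}_2$-isotypic decomposition rather than just iterating $A_k=P_k\oplus D_+(A_{k-1})$, usefully fill in points the paper leaves implicit.
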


\begin{proof}
The decomposition \eqref{P_to_A} is compatible with the action of the group
$\Gamma$. Indeed, the operator $D_+$ commutes with the action of $\Gamma$,
and the spaces $P_j$ are $\Gamma$-invariant, since $D_-$ also commutes with
the action of $\Gamma$. Hence each summand $D_+^{\,r-j}P_j$ in the
decomposition
$$
A_r
=
\bigoplus_{j=0}^{\min(r,m-r)}
D_+^{\,r-j}P_j,
$$
is a $\Gamma$-submodule.
Passing to $\Gamma$-invariants, we obtain
$$
A_r^\Gamma
=
\bigoplus_{j=0}^{\min(r,m-r)}
\left(D_+^{\,r-j}P_j\right)^\Gamma.
$$
It remains to note that
$$
\left(D_+^{\,r-j}P_j\right)^\Gamma
=
D_+^{\,r-j}P_j^\Gamma.
$$
Indeed, the operator $D_+^{\,r-j}$ commutes with the action of the group
$\Gamma$.
If
$$
u=D_+^{\,r-j}p\in \left(D_+^{\,r-j}P_j\right)^\Gamma,
$$
then, applying the Reynolds operator $R_\Gamma$, we have
$$
u
=
R_\Gamma(u)
=
R_\Gamma(D_+^{\,r-j}p)
=
D_+^{\,r-j}R_\Gamma(p).
$$
Since the space $P_j$ is $\Gamma$-invariant, we have
$R_\Gamma(p)\in P_j^\Gamma$. Hence
$$
u\in D_+^{\,r-j}P_j^\Gamma.
$$
The reverse inclusion follows immediately from the fact that $D_+^{\,r-j}$
commutes with the action of $\Gamma$.
\end{proof}

From Theorem~\ref{ AkGamma_primitive_decomposition} formula
\eqref{dim_AG} follows immediately and acquires an additional meaning:
the invariants in $A_r^\Gamma$ consist of liftings of primitive
$\Gamma$-invariants from lower subspaces along $\mathfrak{sl}_2$-chains.

In the next subsection we describe a constructive method for building the
spaces $P_j$ and $P_j^\Gamma$ through bases of Specht modules.

\subsection{Primitive Specht vectors and orbit-sum coordinates}

Recall that the polynomial basis of the Specht module is generated by the
elements
$$
\Delta_T
=
\prod_{i=1}^{k}(x_{\alpha_i}-x_{\beta_i}),
$$
where $T$ runs over standard Young tableaux of shape $(m-k,k)$,
$0\le k\le \lfloor m/2\rfloor$, and
$$
(\alpha_1,\beta_1),\dots,(\alpha_k,\beta_k),
$$
are its two-cell columns; these elements form a basis of the space $P_k$.

In the Schur--Weyl interpretation, the space $P_k$ is the multiplicity space
for $\mathfrak{sl}_2$-chains of type $V_{m-2k}$, and the module generated by
it has the form
$$
\bigoplus_{s=0}^{m-2k}D_+^sP_k
\cong
S^{(m-k,k)}\otimes V_{m-2k}.
$$
Therefore the elements $\Delta_T$ may be regarded as initial vectors of these
$\mathfrak{sl}_2$-chains.

We now pass to the $\Gamma$-invariant part. Since the Reynolds operator
$R_\Gamma$ commutes with $D_-$, it maps $P_k$ into $P_k^\Gamma$. Hence, for
each standard tableau $T$ of shape $(m-k,k)$, we have
$$
R_\Gamma(\Delta_T)\in P_k^\Gamma.
$$
Moreover,
$$
P_k^\Gamma
=
\operatorname{span}
\left\{
R_\Gamma(\Delta_T):
T\ \text{a standard tableau of shape }(m-k,k)
\right\}.
$$
After discarding linearly dependent elements, this system gives a basis of
the space $P_k^\Gamma$.

Let us now express these elements in coordinates with respect to the basis
formed by orbit sums. Expanding the product in $\Delta_T$, we obtain
$$
\Delta_T
=
\sum_{\varepsilon\in\{0,1\}^{k}}
(-1)^{|\varepsilon|}
\boldsymbol{x}_{S_\varepsilon},
$$
where
$$
S_\varepsilon
=
\{\alpha_i:\varepsilon_i=0\}
\cup
\{\beta_i:\varepsilon_i=1\}.
$$
Thus, each monomial in the expansion of $\Delta_T$ corresponds to choosing
exactly one element from each two-cell column of the tableau $T$.

Applying the Reynolds operator, we have
$$
R_\Gamma(\Delta_T)
=
\sum_{\varepsilon\in\{0,1\}^{k}}
(-1)^{|\varepsilon|}
R_\Gamma(\boldsymbol{x}_{S_\varepsilon}).
$$
For what follows, it is convenient to use non-normalized group sums
$$
\mathcal R_S
:=
\sum_{\gamma\in\Gamma}\gamma\cdot\boldsymbol{x}_S
=
|\Gamma|\,R_\Gamma(\boldsymbol{x}_S)
=
|\Gamma_S|\,\mathbf O(\boldsymbol{x}_S),
$$
where $\Gamma_S$ is the stabilizer of the set $S$. Then
$$
|\Gamma|R_\Gamma(\Delta_T)
=
\sum_{\varepsilon\in\{0,1\}^{k}}
(-1)^{|\varepsilon|}
\mathcal R_{S_\varepsilon}.
$$

Let
$$
[G_1],\dots,[G_{b_k}],
$$
be all $b_k$ isomorphism classes of graphs with $k$ edges, and let
$\mathcal R_{G_j}$ be the corresponding group sums taken for fixed
representatives of these classes. Then
$$
|\Gamma|R_\Gamma(\Delta_T)
=
\sum_{j=1}^{b_k} c_{j,T}\mathcal R_{G_j},
$$
where
$$
c_{j,T}
=
\sum_{\substack{\varepsilon\in\{0,1\}^{k}\\
G_{S_\varepsilon}\cong G_j}}
(-1)^{|\varepsilon|}.
$$
Thus, $c_{j,T}$ is the signed number of choices of one element from each
two-cell column of the tableau $T$ which give a graph of isomorphism type
$G_j$.

Therefore, the preceding considerations lead to the following statement.

\begin{theorem}
Let $C_k=(c_{j,T})$ be the matrix whose rows are indexed by isomorphism
classes of graphs with $k$ edges and whose columns are indexed by standard
tableaux of shape $(m-k,k)$. Then the column space of the matrix $C_k$
coincides with the coordinate image of the space $P_k^\Gamma$ in the basis of
group orbit sums $\mathcal R_{G_j}$.
\end{theorem}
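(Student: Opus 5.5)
The plan is to combine three facts already established in the text.

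First, the group sums $\mathcal R_{G_1},\dots,\mathcal R_{G_{b_k}}$ form a basis of $A_k^\Gamma$. Each satisfies $\mathcal R_{G_j}=|\Gamma_{S}|\,\mathbf O(\boldsymbol x_S)$ with $|\Gamma_S|\neq 0$, and by the theorem on orbit sums the elements $\mathbf O(\boldsymbol x_S)$, for $S$ running over orbit representatives with $|S|=k$, form a basis of $A_k^\Gamma$. So the coordinate map $\kappa:A_k^\Gamma\to\mathbb C^{b_k}$ with respect to $\{\mathcal R_{G_j}\}$ is a linear isomorphism.

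Second, for each standard tableau $T$ of shape $(m-k,k)$, the computation preceding the statement gives $\kappa\bigl(|\Gamma|R_\Gamma(\Delta_T)\bigr)=(c_{j,T})_j$, which is exactly the $T$-th column of $C_k$. To justify this I will check one point: if $G_{S_\varepsilon}\cong G_j$, then $S_\varepsilon=\gamma(S_j)$ for some $\gamma\in\Gamma$, where $S_j$ is the fixed representative. Hence $\mathcal R_{S_\varepsilon}=\sum_{\delta}\delta\gamma\cdot\boldsymbol x_{S_j}=\mathcal R_{G_j}$ by reindexing the group sum. Grouping the expansion of $\Delta_T$ by isomorphism type then gives the stated coefficients. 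Each $S_\varepsilon$ has exactly $k$ elements because the columns of $T$ are disjoint, so only classes with $k$ edges occur.

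Third, the column space of $C_k$ is therefore $\kappa\bigl(\operatorname{span}\{R_\Gamma(\Delta_T)\}\bigr)$, since multiplying by $|\Gamma|\neq0$ does not change spans. It remains to show
$$
\operatorname{span}\{R_\Gamma(\Delta_T)\}=P_k^\Gamma.
$$

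\begin{itemize}
\item[$\subseteq$:] $R_\Gamma$ commutes with $D_-$, because $D_-$ commutes with every $\gamma\in\Gamma$. Together with $\Delta_T\in P_k$ (shown in the proof of $P_k\cong S^{(m-k,k)}$), this gives $R_\Gamma(\Delta_T)\in P_k\cap\mathcal A^\Gamma=P_k^\Gamma$.
\item[$\supseteq$:] The $\Delta_T$ form a basis of $P_k$ (same theorem, by the dimension count \eqref{dim_S}). Any $p\in P_k^\Gamma$ can then be written $p=\sum_T a_T\Delta_T$, and since $R_\Gamma$ is the identity on invariants, $p=R_\Gamma(p)=\sum_T a_T R_\Gamma(\Delta_T)$.
\end{itemize}

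Applying the isomorphism $\kappa$ gives the claim. Here $k\le\lfloor m/2\rfloor$ is implicit, since standard tableaux of shape $(m-k,k)$ must exist.

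The only step requiring genuine care is the second one, namely that group sums depend only on the isomorphism class, so that the signed counting really produces coordinates in a fixed basis. It is handled by the reindexing argument above. Everything else is bookkeeping with the Reynolds projector.
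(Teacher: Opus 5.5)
Your proposal is correct and follows the same route as the paper. You use the fact that $\{\Delta_T\}$ is a basis of $P_k$ and that $R_\Gamma$ commutes with $D_-$ to get $P_k^\Gamma=\operatorname{span}\{R_\Gamma(\Delta_T)\}$. You then expand $\Delta_T$ into signed monomials and group the sums $\mathcal R_{S_\varepsilon}$ by isomorphism class, which produces exactly the columns of $C_k$. Beyond the paper, you spell out two points it leaves implicit: that $\mathcal R_{\gamma(S)}=\mathcal R_S$ by reindexing the group sum, and that $p=R_\Gamma(p)=\sum_T a_T R_\Gamma(\Delta_T)$ gives the spanning claim.
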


Thus, the matrix \(C_k\) is an explicit transition matrix from the Specht
basis of the primitive components \(P_k\) to the orbital description of graph
invariants:
$$
\Delta_T
\quad\longmapsto\quad
R_\Gamma(\Delta_T)
\quad\longmapsto\quad
(c_{1T},\dots,c_{b_kT})^{\mathsf T}.
$$
In these coordinates, primitive invariants are sign-changing linear
combinations of orbit sums of graphs with \(k\) edges.

After applying the operator \(D_+^{\,r-k}\) to such elements, we obtain
invariants in higher subspaces:
$$
D_+^{\,r-k}R_\Gamma(\Delta_T)
\in A_r^\Gamma,
\qquad k\le r\le m-k.
$$
Thus, the whole \(\mathfrak{sl}_2\)-chain structure in
\(\mathcal A^\Gamma\) can be described through liftings of signed orbital
combinations obtained from the Specht basis.

\begin{example}{\rm
Consider the case \(n=4,k=2\). There are two isomorphism classes of graphs
with two edges:
$$
G_A=\text{two adjacent edges},\qquad
G_D=\text{two independent edges}.
$$
Therefore the space \(A_2^\Gamma\) has orbital coordinates with respect to
the two group sums
$
\mathcal R_{G_A}, \mathcal R_{G_D}.
$

There are \(9\) standard Young tableaux of shape \((4,2)\):
$$
f^{(4,2)}=\binom62-\binom61=15-6=9.
$$
Denote them by \(T_1,\dots,T_9\):
$$
\begin{array}{ccc}
T_1=
\begin{array}{cccc}
1&3&5&6\\
2&4
\end{array}
&
T_2=
\begin{array}{cccc}
1&3&4&6\\
2&5
\end{array}
&
T_3=
\begin{array}{cccc}
1&3&4&5\\
2&6
\end{array}
\\[12pt]
T_4=
\begin{array}{cccc}
1&2&5&6\\
3&4
\end{array}
&
T_5=
\begin{array}{cccc}
1&2&4&6\\
3&5
\end{array}
&
T_6=
\begin{array}{cccc}
1&2&4&5\\
3&6
\end{array}
\\[12pt]
T_7=
\begin{array}{cccc}
1&2&3&6\\
4&5
\end{array}
&
T_8=
\begin{array}{cccc}
1&2&3&5\\
4&6
\end{array}
&
T_9=
\begin{array}{cccc}
1&2&3&4\\
5&6
\end{array}
\end{array}
$$

For each tableau \(T_i\), we take only its two-cell columns and form the
Specht vector
$$
\Delta_{T_i}=(x_{\alpha_1}-x_{\beta_1})(x_{\alpha_2}-x_{\beta_2}).
$$
After expanding the product and grouping the monomials according to the two
isomorphism types of graphs, we obtain decompositions
$$
|\Gamma|R_\Gamma(\Delta_{T_i})
=
c_{A,i}\mathcal R_{G_A}
+
c_{D,i}\mathcal R_{G_D}.
$$
A direct count gives:
$$
\begin{array}{c|c|c|c}
i & \Delta_{T_i} & c_{A,i} & c_{D,i}\\
\hline
1 &(x_1-x_2)(x_3-x_4) & 0 & 0\\
2 &(x_1-x_2)(x_3-x_5) & -1 & 1\\
3 &(x_1-x_2)(x_3-x_6) & 1 & -1\\
4 &(x_1-x_3)(x_2-x_4) & -1 & 1\\
5 &(x_1-x_3)(x_2-x_5) & 0 & 0\\
6 &(x_1-x_3)(x_2-x_6) & 1 & -1\\
7 &(x_1-x_4)(x_2-x_5) & 0 & 0\\
8 &(x_1-x_4)(x_2-x_6) & 1 & -1\\
9 &(x_1-x_5)(x_2-x_6) & 2 & -2
\end{array}
$$
Thus, if the rows of the matrix \(C_2\) are ordered as
$$
G_A,\ G_D,
$$
and the columns as
$$
T_1,\dots,T_9,
$$
then
$$
C_2
=
\begin{pmatrix}
0&-1&1&-1&0&1&0&1&2\\
0&1&-1&1&0&-1&0&-1&-2
\end{pmatrix}.
$$
All nonzero columns of this matrix are proportional to the vector
$$
\begin{pmatrix}
-1\\
1
\end{pmatrix}.
$$
Therefore
$$
\operatorname{rank} C_2=1.
$$
This agrees with the fact that
$$
\dim P_2^\Gamma
=
\dim A_2^\Gamma-\dim A_1^\Gamma
=
2-1=1.
$$

Thus, in the case \(n=4,\ k=2\), the space of primitive invariants
\(P_2^\Gamma\) is generated by the difference of the two orbital directions:
$$
-\mathcal R_{G_A}+\mathcal R_{G_D}.
$$
}
$\triangle$
\end{example}

\subsection{Primitive multiplicities, orbit numbers, and generating functions}

The transition from the Specht basis to orbital coordinates constructed
above has a direct consequence for the enumeration of graph orbits.

Summing over all subspaces, we obtain the formula for the number of all
non-isomorphic simple graphs on \(n\) vertices:
$$
g_n=\dim\mathcal A^\Gamma
=
\sum_{k=0}^{m}b_k.
$$
Passing to dimensions in formula \eqref{ab}, we have
$$
g_n
=
\sum_{j=0}^{\lfloor m/2\rfloor}
(m-2j+1)a_j, \quad a_j=\dim (P_j^\Gamma).
$$

The multiplicities \(a_j\) can be written in terms of characters. Since
$$
P_j^\Gamma
\cong
\left(\operatorname{Res}^{S_m}_{\Gamma}S^{(m-j,j)}\right)^\Gamma,
$$
we have
$$
a_j
=
\frac1{|\Gamma|}
\sum_{\gamma\in\Gamma}
\chi^{(m-j,j)}(\gamma).
$$

The last formula is a standard property of the Reynolds operator. If \(W\) is
a finite-dimensional \(\Gamma\)-module, then
$$
R_\Gamma=\frac1{|\Gamma|}\sum_{\gamma\in\Gamma}\gamma,
$$
is the projector onto \(W^\Gamma\). Therefore
$$
\dim W^\Gamma=\operatorname{tr}(R_\Gamma)
=
\frac1{|\Gamma|}\sum_{\gamma\in\Gamma}\chi_W(\gamma).
$$

Via the isomorphism \(\varphi:S_n\to\Gamma\), this can be rewritten as
$$
a_j
=
\frac1{n!}
\sum_{\sigma\in S_n}
\chi^{(m-j,j)}(\varphi(\sigma)),
$$
where \(\chi^{(m-j,j)}\) is the character of the Specht module
\(S^{(m-j,j)}\) of the group \(S_m\), and \(\varphi(\sigma)\) is the
permutation of edges induced by the permutation of vertices \(\sigma\).
Hence we obtain the following formula for the number of non-isomorphic simple
graphs on \(n\) vertices:
$$
g_n
=
\frac1{n!}
\sum_{\sigma\in S_n}
\sum_{j=0}^{\lfloor m/2\rfloor}
(m-2j+1)
\chi^{(m-j,j)}(\varphi(\sigma)).
$$

This formula is a representation-theoretic refinement of the classical
formula for the number of non-isomorphic graphs obtained by means of the
cycle index of the pair group:
$$
g_n
=
\frac1{n!}
\sum_{\sigma\in S_n}
2^{c(\varphi(\sigma))},
$$
see \cite{H-P}.
To see this connection, take \(\tau\in S_m\), and let \(N_r(\tau)\) denote
the number of \(r\)-element subsets of the set \([m]\) fixed by the
permutation \(\tau\). The character of the permutation module
\(M^{(m-r,r)}\) is equal to \(N_r(\tau)\).
By Young's rule for two-row permutation modules,
$$
M^{(m-r,r)}
\cong
\bigoplus_{j=0}^{\min(r,m-r)}
S^{(m-j,j)}.
$$
Therefore, for each \(r\), we have the equality of characters
$$
N_r(\tau)
=
\sum_{j=0}^{\min(r,m-r)}
\chi^{(m-j,j)}(\tau).
$$
Summing over all \(r=0,\dots,m\), we obtain
$$
\sum_{r=0}^{m}N_r(\tau)
=
\sum_{j=0}^{\lfloor m/2\rfloor}
(m-2j+1)\chi^{(m-j,j)}(\tau),
$$
because the fixed module \(S^{(m-j,j)}\) occurs in precisely those
permutation modules \(M^{(m-r,r)}\) for which
$$
j\le r\le m-j.
$$

The last sum
$$
\sum_{r=0}^{m}N_r(\tau)
$$
is the number of all subsets of the set \([m]\) fixed by the permutation
\(\tau\). If \(c(\tau)\) denotes the number of cycles of the permutation
\(\tau\), then such a subset must be a union of cycles of \(\tau\), and hence
$$
\sum_{r=0}^{m}N_r(\tau)=2^{c(\tau)}.
$$
Substituting \(\tau=\varphi(\sigma)\), we obtain the classical
Burnside--Polya formula.

Thus, Burnside's formula gives the external orbital count, whereas the
decomposition through \(a_j\) displays its internal \(\mathfrak{sl}_2\)- and
Specht-structure.

The same connection can be written at the level of generating functions.
Denote
$$
g_n(z)=\sum_{r=0}^{m} b_r z^r,
\qquad
p_n(z)=\sum_{j=0}^{\lfloor m/2\rfloor} a_jz^j.
$$
Since
$$
b_r=\sum_{j=0}^{\min(r,m-r)}a_j,
$$
each primitive contribution \(a_j\) appears in all subspaces
$$
j,j+1,\dots,m-j.
$$
Therefore
$$
g_n(z)
=
\sum_{j=0}^{\lfloor m/2\rfloor}
a_j(z^j+z^{j+1}+\cdots+z^{m-j}).
$$
Multiplying by \(1-z\), we obtain
$$
(1-z)g_n(z)
=
\sum_{j=0}^{\lfloor m/2\rfloor}
a_j(z^j-z^{m-j+1}).
$$
The first sum is equal to \(p_n(z)\), while the second is equal to
$$
\sum_{j=0}^{\lfloor m/2\rfloor}a_jz^{m-j+1}
=
z^{m+1}\sum_{j=0}^{\lfloor m/2\rfloor}a_jz^{-j}
=
z^{m+1}p_n(z^{-1}).
$$
Hence
$$
(1-z)g_n(z)
=
p_n(z)-z^{m+1}p_n(z^{-1}),
$$
and the generating function \(g_n(z)\) decomposes into contributions of full
\(\mathfrak{sl}_2\)-chains generated by primitive \(\Gamma\)-invariants.

On the other hand, for \(g_n(z)\) the determinant formula is known, see
\cite{BB2016}:
$$
g_n(z)
=
\frac1{n!}
\sum_{\sigma\in S_n}
\frac{\det(I_m-\varphi(\sigma) z^2)}
     {\det(I_m-\varphi(\sigma) z)}.
$$
Therefore \(p_n(z)\) can be recovered from it as the lower half of the
polynomial
$
(1-z)g_n(z):
$
$$
p_n(z)
=
\left[
(1-z)
\frac1{n!}
\sum_{\sigma\in S_n}
\frac{\det(I_m-\varphi(\sigma) z^2)}
     {\det(I_m-\varphi(\sigma) z)}
\right]_{\le \lfloor m/2\rfloor}.
$$
Here the symbol \([\cdot]_{\le \lfloor m/2\rfloor}\) means truncation of
terms of degree greater than \(\lfloor m/2\rfloor\). Thus, the determinant
formula for \(g_n(z)\) contains not only information about the number of graph
orbits in each subspace \(P_k\), but also information about the primitive
multiplicities \(a_j\), that is, about the Specht components which generate
these orbits.

\section{Conclusions and further work}

In this paper, the graph algebra was considered as an object of representation
theory. The starting point is a dual view of it: on the one hand, its monomial
basis is indexed by simple graphs on a fixed set of vertices; on the other
hand, it is the algebra of pseudo-Boolean functions on the Boolean cube of
edge configurations.

The main result of the paper is the construction and study of a natural
\(\mathfrak{sl}_2\)-structure on the graph algebra. The operators of adding
and deleting one edge define on \(\mathcal A\) the structure of an
\(\mathfrak{sl}_2\)-module, which makes it possible to decompose the algebra
into irreducible components. The primitive spaces arising in this
decomposition receive a representation-theoretic interpretation as two-row
Specht modules.

Separately, the compatibility of this structure with the action of the pair
group \(S_n^{(2)}\), induced by the renumbering of vertices, was established.
Since the \(\mathfrak{sl}_2\)-action commutes with the action of this group,
the space of graph invariants inherits the corresponding module structure.
This makes it possible to describe invariants not only through orbit sums of
graphs, but also through primitive components and their
\(\mathfrak{sl}_2\)-generations.

Schur--Weyl duality was used as a conceptual mechanism connecting the
\(\mathfrak{sl}_2\)-decomposition of the graph algebra with its permutation
structure on edges. In this approach, the classical orbital enumeration of
graphs receives a representation-theoretic refinement: the spaces whose
dimensions are counted by the Burnside--Polya formula decompose into natural
primitive contributions.

Further work may be directed toward a more detailed study of the
multiplicative structure of the invariant algebra, transition matrices between
Specht bases and orbit sums, and the construction of efficient algorithms for
computing primitive invariants in higher ranks.

\end{document}